\documentclass[12pt]{amsart}
\usepackage[utf8]{inputenc}
\usepackage{amsmath,amsthm,amssymb}
\usepackage{xcolor}

\usepackage[british]{babel}

\usepackage[shortlabels]{enumitem}
\usepackage{hyperref}
\usepackage{mathtools}
\usepackage{graphicx}

\newtheorem{theorem}{Theorem}[section]

\newtheorem{remark}[theorem]{Remark}
\newtheorem{lemma}[theorem]{Lemma}

\newtheorem{prop}[theorem]{Proposition}

\newtheorem{corollary}[theorem]{Corollary}
\mathtoolsset{showonlyrefs}

\newcounter{stepctr}
\newif\ifinsteppage

\newenvironment{steps}{%
	\setcounter{stepctr}{0}%
	\insteppagefalse
}{%
	\par\bigskip
}

\newcommand{\step}[1]{%
	\ifinsteppage
	\par\bigskip
	\fi
	\insteppagetrue
	\refstepcounter{stepctr}%
	\noindent\textbf{Step \thestepctr:} #1.\par
}

\newcommand{\les}{\lesssim}

\newcommand{\alp}{\alpha}

\newcommand{\De}{\Delta}

\newcommand{\dem}{\de^{-O(\e)}}
\newcommand{\dep}{\de^{O(\e)}}
\newcommand{\demm}{\de^{-\e}}
\newcommand{\depp}{\de^\e}

\newcommand{\R}{\mathbb{R}}\newcommand{\C}{\mathbb{C}}

\newcommand{\Z}{\mathbb{Z}}

\newcommand{\vi}{\mathbf{i}}

\newcommand{\cA}{\mathcal{A}}
\newcommand{\cD}{\mathcal{D}}

\newcommand{\cU}{\mathcal{U}}

\newcommand{\dy}[2]{\cD_{#1}(#2)}

\newcommand{\cV}{\mathcal{V}}

\newcommand{\de}{\delta}
\newcommand{\e}{\epsilon}

\newcommand{\Q}{\mathbb{Q}}

\newcommand{\ups}{\upsilon}

\newcommand{\cns}[2]{N_{{#1}}({#2})}

\newcommand{\cn}[1]{N_\de({#1})}
\newtheorem*{ack*}{Acknowledgment}

\author{William O'Regan}
\begin{document}
	\title{Bourgain's projection theorem over normed division algebras}

	\keywords{}
	\thanks{WOR is supported in part by an NSERC Alliance grant administered by Pablo Shmerkin and Joshua Zahl}
	\begin{abstract}
      We give a simple and self-contained proof of an extension of a projection theorem of Bourgain over the reals to division algebras over local fields of zero characteristic. 
	\end{abstract}
	\maketitle

    \section{Introduction and statement of results}
    \subsection{Introduction}
    Bourgain's discretised projection theorem in $\R$ \cite{bou} is one of the most influential `expansion' results in analysis. In recent years, there have been a large number of applications in dynamical systems,  geometric measure theory and harmonic analysis. This includes: the radial projection problem \cite{OSW24, Ren23}, the Furstenberg set problem in the plane \cite{OrponenShmerkin23b, RenWang23}, and the Kakeya problem in $\mathbb{R}^3$ \cite{WangZahl25a, WangZahl25b, WangZahl25c}. (There is also now a simple and essentially self-contained proof of his result \cite{o2025simple}.)

    To be more precise, none of these works invoke Bourgain's theorem directly, but rather build on the improved Furstenberg estimate in \cite{OrponenShmerkin23}, which in turn uses Bourgain's theorem as a key ingredient. Bourgain's projection theorem is also a key (indirect) component in recent progress on the Falconer distance set problem and related problems \cite{RazZahl24, ShmerkinWang25}.

Bourgain's projection theorem has also been a key tool in obtaining quantitative equidistribution estimates in ergodic theory. An early application was to the equidistribution of orbits of non-abelian semigroups on the torus \cite{BFLM11}. Bourgain's theorem is also a crucial component in the recent breakthrough on exponential equidistribution of random walks on quotients of $\textrm{SO}(2,1)$ and $\textrm{SO}(3,1)$ \cite{BenardHe24}, and the related work establishing Khintchine's theorem for self-similar measures \cite{BHZ24}. These recent works do not invoke Bourgain's theorem directly, but rather build on higher rank \cite{He20} and non-linear \cite{Shmerkin23} versions of it, which in turn apply Bourgain's projection theorem as a black box. 

Outside of the setting over $\R,$ a version of Bourgain's discretised projection theorem over $\C$ was obtained in \cite{bourgaingamspectral} with a direct application to the spectral gap problem for $SU(d).$ (The problem for $SU(2)$ was considered in \cite{bougam} and utilised a similar discretised sum-product estimate.) For other fields, or division algebras, it was obtained in \cite{nic} that if $E$ is a normed division algebra over $\R$ or $\Q_p$, and $A$ is a discretised set that behaves like a set of `dimension' $s,$ as well as is not trapped inside a sub-algebra, then $A+AA$ will grow in exponent. There, it is also shown that the growth of dimension may be made to be independent of the dimension of $E$ over its base field, provided that $E$ is not a non-commutative extension of $\Q_p.$ There, the strategy was inspired by the one of \cite{gut} which obtains a short proof of the discretised ring theorem. This, in turn, is inspired by a clever proof of the sum-product problem in $\mathbb{F}_p$ of Garaev \cite{garaev2007explicit}. 

The aim of this note is to combine the strategies of \cite{nic}, \cite{gut}, \cite{garaev2007explicit}, and \cite{o2025simple} to obtain a generalisation of Bourgain's projection theorem in normed division algebras over $\R$ or $\Q_p.$ Our proof is short, simple and uses few auxiliary results. While versions over $\R$ already see applications, we expect that the version over $\Q_p$ will see similar applications in the future.

    \subsection{Statement of results}
    Below is a generalisation of \cite[Theorem 2]{bou}. One should also view \cite[Proposition 2]{bourgaingamspectral} which gives a variant of this result over $\C.$ It is readily checked that Theorem \ref{thm.babyproj} implies that result. 
\begin{theorem}\label{thm.babyproj}
     Let $d$ be a positive integer. Let $E$ be a normed division algebra of dimension $d$ over $\Q_p$ or $\R.$ Let $0 < s \leq \sigma  < d, t > 0.$ There exists $c = c(s,\sigma,d,t)> 0$ so that the following holds for all $\de,\e >0$ small enough.
    
    Let $A \subset B(0,1)$ be a $(\de,s,\demm)$-set with $|A| = \de^{-\sigma}.$ Let $X \subset B(0,1)$ be a $(\de,t,\demm)$-set which avoids sub-algebras. Then there exists an $x \in X$ so that
    $$\cn{A + xA} > \de^{-c}|A|.$$
    
        Further, provided that $E$ is not a non-commutative division algebra over $\Q_p,$ then $c = c(s,\sigma,t,t/d).$ 
\end{theorem}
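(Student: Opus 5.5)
We argue by contradiction, following the Garaev--Guth--Katz--Zahl strategy as adapted in \cite{o2025simple} and \cite{nic}. Suppose that for every $x \in X$ we have $\cn{A + xA} \le \de^{-c}|A|$. The plan is to show that this forces the set of ``good directions''
$$\Lambda := \{x \in E : \cn{A + xA} \le \de^{-C c}|A|\}$$
to be approximately closed under addition, multiplication and inversion at scale $\de$. Since $X$ is a $(\de,t,\demm)$-set avoiding sub-algebras, this would produce a discretised approximate subring of $E$ that is not trapped near a sub-algebra. Iterating the ring operations (a bounded number of times depending on $d$ and $t$) then shows that this subring must fill out a large portion of $B(0,1)$. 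That conclusion contradicts $|A|=\de^{-\sigma}$ with $\sigma<d$, because $x$ ranging over a ball of $E$ would force $A+xA$ to have size close to $\de^{-d}$.

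\textbf{Step 1: Plünnecke--Ruzsa and Balog--Szemerédi--Gowers.} These are valid in any abelian group, so they apply to $(E,+)$ at scale $\de$, with multiplication by $x$ a bi-Lipschitz additive map when $|x|\sim 1$. With them we show:
\begin{itemize}
\item $x,y \in \Lambda$ implies $x+y,\ x-y \in \Lambda'$;
\item $x,y\in\Lambda$ implies $xy \in \Lambda'$ (using $A+xyA \subset$ a sumset controlled by $A+xA$ and $xA + xyA = x(A+yA)$);
\item $x^{-1}\in\Lambda'$ for $|x|\gtrsim \de^{\e}$.
\end{itemize}
Here $\Lambda'$ uses the constant $\de^{-O(c)}$. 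Non-commutativity only matters in keeping track of left versus right multiplication. We consistently use $A+xA$ with left multiplication, and use that $x(A+yA)$ has comparable covering number to $A+yA$ because $E$ is normed (so $|xz|=|x||z|$).

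\textbf{Step 2: Escaping sub-algebras.} We need to show that polynomial expressions $P(x_1,\dots,x_k)$ with $x_i\in X$ cover a $\de^{O(\e)}$-dense portion of a ball. We would use the non-concentration of $X$ together with the fact that $X$ avoids sub-algebras. At each stage, the span of the current expressions, if not yet all of $E$, is a proper subspace $V$. Because $X$ avoids sub-algebras, some product $xv$ or sum escapes a $\de^{\e}$-neighbourhood of $V$, which increases dimension. After at most $d$ steps we reach a full-dimensional well-conditioned basis. Then sums of $\de^{-O(\e)}$ many products give a set $\Lambda''$ containing a $\de$-net of a ball of radius $\de^{O(\e)}$, with constant $\de^{-O_d(c)}$.

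\textbf{Step 3: Contradiction, and the main obstacle.} For $x$ in this ball, $\cn{A+xA}\le \de^{-O(c)}|A|$. Averaging over $x$ (an energy/Fubini count) then gives $\cn{A+xA}\gtrsim \de^{-\min(2s,d)+O(\e)}$ for most $x$ by Frostman-type estimates on $A$, which exceeds $\de^{-c}|A|$ since $s>0$, $\sigma<d$ and $c$ is small. The main obstacle is Step 2: getting quantitative escape with losses depending only on $(s,\sigma,t,d)$. For the sharper claim $c=c(s,\sigma,t,t/d)$ we would need the number of iterations to be independent of $d$. Following \cite{nic}, we would replace the dimension-increment argument by growing the $\de$-covering exponent of $\Lambda$ by a fixed amount per step, using commutativity (or the structure of $\R$-division algebras) to avoid the degenerate non-commutative $\Q_p$ case.
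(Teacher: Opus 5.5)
Your overall architecture is sound. It is the contrapositive of the paper's argument: the paper expands $X$ to a $(\de,u,\demm)$-set $Y\subset(NX^{(N)}-NX^{(N)})\cap B(0,1)$ with $u>\sigma$, applies Lemma \ref{lem.tv} to find $y\in Y$ with $\cn{A+yA}>\de^{-c}|A|$, and then pulls this back to an element of $X$ using exactly your Step 1 Pl\"unnecke--Ruzsa manipulations. Beforehand it discards $X\cap B(0,\de^{2\e/t})$, so that left multiplication costs only $\dem$; inverses and Balog--Szemer\'edi--Gowers are not needed.

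The genuine gap is Step 2, which is where all the difficulty lies. Escape from sub-algebras (Theorem \ref{thm.escape}) only gives a basis $v_1,\dots,v_d\in X^{(d)}$ that is well conditioned at scale $\de^{O(\e)}$. It carries no information about mass at scale $\de$. If $|X|\approx\de^{-t}$ with $t$ small, a bounded combination such as $v_1X+\cdots+v_dX$ has covering number at most $\de^{-dt}$. Building a $\de$-net of a ball from a basis by additive combinations needs about $\de^{-1}$ summands. That is unaffordable: each operation on $\Lambda$ costs a factor $\de^{-O(c)}$, and an $M$-fold sum costs roughly $\de^{-O(Mc)}$. You may therefore use only a number of operations bounded in terms of $(s,\sigma,t,d)$, and choose $c$ small afterwards. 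Your ``sums of $\de^{-O(\e)}$ many products'' would incur a loss $\de^{-c\,\de^{-O(\e)}}$, so the constant $\de^{-O_d(c)}$ you claim does not follow. For $E=\R$ the basis step is vacuous, so Step 2 there simply asserts a discretised sum-product theorem without proof.

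The missing ingredient is a quantitative discretised ring theorem in $E$. A $(\de,s,\demm)$-set avoiding sub-algebras satisfies $\cn{10(A^{(2d)}-A^{(2d)})}\ge\de^{-s-c_1}$ with $c_1=s(1-s/d)/4$, where $2d$ becomes $10$ in the commutative $\Q_p$ case (Theorems \ref{thm.expand} and \ref{thm.expand2}). The paper proves this by the Garaev/Guth--Katz--Zahl dichotomy on the quotient set $Q$:
\begin{itemize}
\item Either $Q_\De$ is approximately invariant under the maps built from the basis. The resulting $\sim\log(1/\De)$ iterations cost nothing, because they concern a fixed set rather than a compounding Pl\"unnecke constant. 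Hence $Q$ is $\De$-dense and $A(A-A)$ is large.
\item Or a single point escapes $Q$, and this forces $\cn{pA+qA}$ to be large.
\end{itemize}
This estimate is iterated boundedly often (Corollary \ref{cor.expand}, Proposition \ref{prop.expansion}), only until the dimension reaches some fixed $u\in(\sigma,d)$. A ball is never needed. It also cannot be reached in boundedly many steps, since $c_1(s)\to0$ as $s\to d$.

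Your Step 3 bound $\cn{A+xA}\gtrsim\de^{-\min(2s,d)}$ is too weak, since it lies below $|A|=\de^{-\sigma}$ whenever $2s\le\sigma$. The correct input is the count of Lemma \ref{lem.tv}: a $(\de,u,\demm)$-set of directions with $u>\sigma$ contains $x$ with $\cn{A+xA}\ge\de^{-\frac su(u-\sigma)+O(\e)}|A|$.

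Finally, the $t/d$-dependence does not come from a dimension-increment argument. It comes from the number of iterations in Proposition \ref{prop.expansion} being independent of $d$ when Theorem \ref{thm.expand2} applies, and from $d\le4$ over $\R$.
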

Notation and basic definitions may be found in Section \ref{subsect.defs}. Let $\pi_x$ be the map $(a,b)\mapsto a+xb.$
    Below is a generalisation of \cite[Theorem 3]{bou}, with a conclusion similar to that of the closely related \cite[Theorem 5]{bou}. Similarly to \cite{bou}, the proof involves reducing the statement of Theorem \ref{thm.adultproj} to Theorem \ref{thm.babyproj} via Balog--Szemeredi---Gowers.  
\begin{theorem}\label{thm.adultproj}
      Let $d$ be a positive integer. Let $E$ be a normed division algebra over $\Q_p$ or $\R.$ Let $0 < s \leq \sigma  \leq 2d, t > 0.$ There exists a universal $C \geq 1$ and $c_0 = c_0(s,\sigma,d,t) > 0$ so that the following holds for all $\de,\e >0$ small enough.
    
 Let $G \subset  B_{E \times E}(0,1)$ be a $(\de,s,\demm)$-set with $|G| = \de^{-\sigma}.$ Let $X \subset B_E(0,1)$ be a $(\de,t,\demm)$-set which avoids sub-algebras. Then there exists $F \subset G$ with $|F| > \de^{Cc_0} |G|$ and an $x \in X$ so that
        \begin{equation}\label{eq.conclusion}
        \cn{\pi_x(F')} > \de^{-c}|G|^{1/2} \text{ for all } F' \subset F \text{ with } |F'| > \depp |F|.
        \end{equation}
        
            Further, provided that $E$ is not a non-commutative division algebra of $\Q_p,$ then $c_0 = c_0(s,\sigma,t,t/d).$
        \end{theorem}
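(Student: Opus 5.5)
We argue by contradiction, reducing Theorem \ref{thm.adultproj} to Theorem \ref{thm.babyproj} through the Balog--Szemer\'edi--Gowers theorem, as in \cite{bou}.

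\textbf{Setting up the contradiction.} Suppose that for every $x \in X$ and every $F \subset G$ with $|F| > \de^{Cc_0}|G|$ there is some $F' \subset F$ with $|F'| > \depp|F|$ and $\cn{\pi_x(F')} \le \de^{-c}|G|^{1/2}$. Start from $F_0 = G$ and run an exhaustion procedure. Repeatedly remove such bad subsets $F'$ for a fixed $x$, until what remains has size at most $\de^{Cc_0}|G|/2$. This covers most of $G$ by disjoint pieces $F'_j$, each with small projection $\pi_x(F'_j)$.

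Applying this for many $x$ (a positive proportion of $X$, after pigeonholing) and using Cauchy--Schwarz, we obtain many pairs $(g,g') \in G \times G$ with $\pi_x(g)$ and $\pi_x(g')$ $\de$-close. Concretely, the energy $\mathcal{E}_x = |\{(g,g') : |\pi_x(g) - \pi_x(g')| \le \de\}|$ satisfies $\mathcal{E}_x \gtrsim \de^{O(c)+O(\e)}|G|^{3/2}$ for all $x$ in a large subset $X' \subset X$. Refining $X'$ if necessary, it is still a $(\de,t,\de^{-O(\e)})$-set, and it still avoids sub-algebras provided the non-concentration constants are tracked.

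\textbf{Passing to a product set.} Next, pigeonhole the fibres. Use the $(\de,s)$-non-concentration of $G$ together with a standard Fubini/pigeonholing argument to pass to a subset $G_1 \subset G$ of the form $\bigcup_{a \in A_1} \{a\} \times B_a$. Here $|A_1| \approx |G|^{1/2}$ and the $|B_a|$ are comparable. Both $A_1$ and a typical $B_a$ should be $(\de,s',\de^{-O(\e)})$-sets with $s' \gtrsim s - d$ or, better, comparable to $s/2$. The projections $\pi_x(G)$ having size about $|G|^{1/2}$ forces $|G| \lesssim \de^{-2\sigma'}$ with both marginals of size about $\de^{-\sigma/2}$.

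For a fixed good $x$, the large additive energy between $A_1$ and $xB$ lets us apply the Balog--Szemer\'edi--Gowers theorem (in its discretised form). This gives $A' \subset A_1$ and $B' \subset B$ with
\[
|A'|, |B'| \ge \de^{O(c+\e)}|G|^{1/2}, \qquad \cn{A' + xB'} \le \de^{-O(c+\e)}|G|^{1/2}.
\]
To get a single set $A$ working for many $x$ simultaneously, we have two options. The first is to use Pl\"unnecke--Ruzsa, together with the observation that $\cn{A' + xB'}$ small implies $\cn{A' + xA'}$ small via $A'-A'$ and $B'-B'$ comparisons. The second, which is simpler, is to fix one $x_0$ and work with $A := A' - A'$ style sets. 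The main approach is to use the Ruzsa triangle inequality, which gives $\cn{A' + xA'} \le \de^{-O(c+\e)}|A'|$ for all $x \in X''$. Here $X''$ is a large subset of $X'$, obtained after another pigeonhole to a popular $A'$; this is possible since BSG outputs depend on $x$ only through at most $\de^{-O(\e)}$ scales.

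\textbf{Conclusion and main obstacle.} Finally, apply Theorem \ref{thm.babyproj} to $A'$, which is a $(\de,s'',\de^{-O(c+\e)})$-set with $|A'| = \de^{-\sigma''}$, and to $X''$, which avoids sub-algebras. This gives some $x$ with $\cn{A' + xA'} > \de^{-c_1}|A'|$, where $c_1 = c_1(s'',\sigma'',d,t)$. This contradicts the previous bound once $c, \e$ and $c_0$ are chosen small relative to $c_1$; the factor $C$ absorbs the losses in the BSG exponents. In the non-commutative $\Q_p$-free case the dependence is inherited directly from Theorem \ref{thm.babyproj}.

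The main obstacle is making the BSG output uniform across many $x$. Producing a single $A'$ that is simultaneously good for a sub-algebra-avoiding, non-concentrated set of $x$, while keeping all non-concentration parameters under control, is where I would concentrate the effort. I would first try Bourgain's device of fixing a popular pair of fibres and using the Ruzsa triangle inequality to transfer smallness from $A'+xB'$ to $A'+xA'$.
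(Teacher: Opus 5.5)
Your proposal has a genuine gap, and it is the one you flag yourself; the repair you suggest does not work. Your contradiction needs a \emph{single} set $A'$ with $\cn{A'+xA'}\le\de^{-O(c+\e)}|A'|$ for every $x$ in a large, non-concentrated, sub-algebra-avoiding $X''$.

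The Balog--Szemer\'edi--Gowers output $A'_x\subset A_1$ is an arbitrary dense subset depending on $x$. The claim that it depends on $x$ ``only through $\de^{-O(\e)}$ scales'' is false, so you cannot pigeonhole to a common $A'$. Nor does smallness of $\cn{A'+xB'}$ in one direction give smallness of $\cn{A'+xA'}$: in $\C$ take $A'$ a real interval, $x=i$ and $B'=-iA'$.

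The missing idea is that no uniformity in $x$ is needed. The negated statement is quantified over \emph{every} $F\subset G$ with $|F|>\de^{Cc_0}|G|$ and every $x\in X$, and your opening exhaustion into energy bounds throws this away. The paper proceeds as follows.
\begin{itemize}
\item It applies Balog--Szemer\'edi--Gowers once, in one fixed bad direction $x_3$. This gives $A\subset A_1$ and $B$ in a $\de$-net of $x_3A_2$ with $\cn{A+B}$ small and $H_0=H\cap(A\times B)$ dense.
\item It applies Theorem \ref{thm.babyproj} to $A$ and $Xx_3^{-1}$, obtaining one $x$ with $\cn{A+xx_3^{-1}A}>\de^{-c}|A|$.
\item Only then does it invoke the negated hypothesis, at this $x$ and with $F$ the preimage of $H_0$. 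This is why the statement asks for $|F|>\de^{Cc_0}|G|$ rather than $F=G$. It produces $F'\subset F$ with $\cn{\pi_x(F')}$ small.
\item It covers $A\times A$ by translates $H'-t$, $t\in(A-A)\times(B-A)$, of the image $H'$ of $F'$. Pl\"unnecke--Ruzsa with $\cn{A+B}$ small then gives $\cn{A+xx_3^{-1}A}\lesssim\de^{-O(c_1)}|A|$, a contradiction. This step genuinely combines the two directions $x_3$ and $x$.
\end{itemize}

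Two further steps are asserted rather than proved.

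First, the product structure. Nothing you derive forces the coordinate marginals of $G$ to have size $\approx|G|^{1/2}$. Without $|A_1||B_1|\le\de^{-O(c_0)}|G|$ the graph is not dense and Balog--Szemer\'edi--Gowers says nothing. The paper removes $X\cap B(0,\de^{\e/2t})$ and uses the negated statement three times, on $G$, $G_1$, $G_2$. This yields bad directions $x_1,x_2,x_3\in X$, pairwise $\de^{\e/2t}$-separated, with nested $G_3\subset G_2\subset G_1\subset G$, $|G_{j+1}|>\depp|G_j|$, and $\cn{\pi_{x_j}(G_j)}\le\de^{-c_0}|G|^{1/2}$. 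A $\de^{-O(\e)}$-bi-Lipschitz linear change of coordinates turns $\pi_{x_1},\pi_{x_2}$ into the coordinate projections. Then $G_2\subset A_1\times A_2$ with $|A_1|,|A_2|\le\de^{-c_0-O(\e)}|G|^{1/2}$, and $G_3$ is $\de^{O(c_0)}$-dense in $A_1\times A_2$.

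Second, the non-concentration of $A$ required by Theorem \ref{thm.babyproj}. Projections of a $(\de,s)$-set need inherit none: your $s-d$ is vacuous for $s\le d$, and $G=\{0\}\times A_2$ has a one-point first marginal. Passing to a Balog--Szemer\'edi--Gowers subset only makes this worse. The paper first keeps only those $\de$-cubes $I$ of $A'$ with heavy fibres, $|\pi^{-1}(I)\cap G|\ge\de^{10c_1}|G|^{1/2}$. It then proves the resulting $A$ is a $(\de,\eta,\demm)$-set with $\eta=\eta(s,t)>0$. If $A$ concentrated on a $\rho$-ball, $G$ would put mass $\gtrsim\de^{20c_1}\rho^{\eta}|G|$ in a $\rho$-strip for many directions. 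Inclusion--exclusion over a $\rho^{\alpha}$-separated family of such strips, whose pairwise intersections lie in balls of radius $\sim\rho^{1-\alpha}$, then contradicts the $(\de,s)$ non-concentration of $G$.
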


We make a few remarks about the dependence of $c$ and $c_0$ on other parameters. For normed division algebras over $\R$  we stress that our proofs output values of $c$ and $c_0$ that \textit{do} depend on $d.$ However by Wedderburn's theorem, all normed division algebras over $\R$ must be isomorphic to $\R, \C$ or $\mathbb{H},$ and so $d \leq 4.$ So by taking $c, c_0$ to be uniform over these three choices, we get a value independent of $d.$ A careful inspection of our proof of Theorem \ref{thm.babyproj} would show that the value of $c$ outputted for $\mathbb{H}$ would be smaller than that of (say) $\R,$ but merely from a qualitative point of view $c$ will not depend on $d.$

When the normed division algebra is over $\Q_p$ the story is different, as these may have arbitrarily large dimension. Here, provided that $E$ is not a non-commutative division algebra over $\Q_p,$ then the gain in exponent will depend on the normalised dimension of the set of projections (its usual dimension divided by $d$), rather than on $d$ itself. Our proof for the commutative case (which is based on the one in \cite{nic}) very much uses commutativity, and so does not seem to extend to the non-commutative case.

\hfill 
        
        Theorem \ref{thm.adultproj} is sharp in a number of ways. We remark that we are not able to upgrade Theorem \ref{thm.adultproj} to a result about `most' $x \in X$ with a statement in the spirit of \cite[Theorem 5]{bou} or \cite[Theorem 1.3]{o2025simple}. A counterexample in $\C$ would be (supposing that $\de^{-1}$ is an integer) 
        $$A = \{0,\de,2\de\dots, 1\},$$
        $$G = A \times A,$$
        $$X = A \cup \{i\}.$$
        Note that $A$ is a $(\de,1,C)$-set, and $G$ is a $(\de,2,C)$-set for some $C \sim 1.$ Then for all elements of $X$ apart from $i$ we have
        $\cn{\pi_x(G)} \sim |G|^{1/2}.$
        (Indeed $\cn{\pi_i(G)} \sim |G|.$)

We are also not able to upgrade Theorem \ref{thm.adultproj} to take $F = G,$ which is possible when $E = \R.$ See \cite[Theorem 1.3]{o2025simple}. A counterexample in $\C$ is a variant of the example above. Letting $A$ be as above, consider
 $$G_0 = A \times A,$$ 
 $$G_1 = iA \times iA,$$
$$G = G_0 \cup G_1$$
and 
$$X = A \cup iA.$$
Then $G$ is a $(\de,2,C)$-set and $X$ is a $(\de,1,C)$-set for some $C \sim 1.$ Note that
$$|G_0|, |G_1| \geq |G|/2 \gg \de^{\e}|G|,$$
provided that $\de$ and $\e$ are small enough.
Take $x \in X.$ If $x \in \R$ then
$$\cn{\pi_x (G_0)} \sim |G|^{1/2}.$$
If $x \in \Im $ then
$$\cn{\pi_x (G_1)} \sim |G|^{1/2}.$$
We remark here that for this example the conclusion of Theorem \ref{thm.adultproj} holds for each $x \in X,$ in the sense that for all $x \in X$ we may find $F_x \subset G$ with the desirable density so that \eqref{eq.conclusion} holds.

We are indeed able to upgrade our conclusion provided that $X$ strongly avoids sub-algebras. One should view Section \ref{subsect.defs} for this definition.

\begin{theorem}\label{thm.adultproj2}
      Let $d$ be a positive integer. Let $E$ be a normed division algebra over $\Q_p$ or $\R.$ Let $0 < s \leq \sigma  \leq 2d, t > 0.$ There exists $c_0 = c_0(s,\sigma,d,t) > 0$ so that the following holds for all $\de,\e >0$ small enough.
    
 Let $G \subset  B_{E \times E}(0,1)$ be a $(\de,s,\demm)$-set with $|G| = \de^{-\sigma}.$ Let $X \subset B_E(0,1)$ be a $(\de,t,\demm)$-set which strongly avoids sub-algebras. Then there exists $Y \subset X$ with $|Y| \geq (1-\dep)|X|$ so that
        $$\cn{\pi_x(F)} > \de^{-c}|G|^{1/2} \text{ for all } x \in Y \text{ and } F \subset G \text{ with } |F| > \depp |G|.$$
        
            Further, provided that $E$ is not a non-commutative division algebra of $\Q_p,$ then $c_0 = c_0(s,\sigma,t,t/d).$
        \end{theorem}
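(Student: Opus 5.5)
We will deduce Theorem \ref{thm.adultproj2} from Theorem \ref{thm.adultproj} by contradiction, using that the notion of strongly avoiding sub-algebras (Section \ref{subsect.defs}) is inherited by large subsets. Suppose the conclusion fails. Then the set $B \subset X$ of ``bad'' $x$ has $|B| > \dep |X|$ for a suitably large implied constant. For each $x \in B$ there is some $F_x \subset G$ with $|F_x| > \depp|G|$ and $\cn{\pi_x(F_x)} \le \de^{-c}|G|^{1/2}$. We check that $B$ is a $(\de,t,\de^{-O(\e)})$-set, since a large subset of a Frostman set is again Frostman with a worse constant. We also check that $B$ still avoids sub-algebras, which is exactly what the strong version of the definition provides: every subset of relative density $\dep$ still avoids sub-algebras, with losses of $\de^{O(\e)}$ in the parameters.

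The next step is to find one common $F$ that is bad for many $x$ simultaneously. This is done by double counting over $\de$-cubes of $G$. We have $\sum_{x\in B}|F_x| \ge \depp|G||B|$. Hence, by Cauchy--Schwarz (or simply pigeonholing a cube $g$ lying in many $F_x$ and iterating), there are many pairs $x,x' \in B$ with $|F_x\cap F_{x'}| \gtrsim \de^{2\e}|G|$.

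This alone does not give one $F$ for all $x$, so I would instead rerun the proof of Theorem \ref{thm.adultproj} with the sets $F_x$ varying. That proof goes through Balog--Szemer\'edi--Gowers to produce, from smallness of $\pi_x(F_x)$, structured sets $A_x$ with $\cn{A_x + xA_x} \lesssim \de^{-O(c)}|A_x|$. Those $A_x$ are then fed into Theorem \ref{thm.babyproj}. To apply Theorem \ref{thm.babyproj} we need a single $A$ that works for a $(\de,t)$-set of $x$'s. So we pigeonhole the $F_x$: choose a random $g_0 \in G$. With positive probability the set $B' = \{x \in B : g_0 \text{ lies in many } F_x\text{-rich fibres}\}$ has size $\ge \de^{O(\e)}|B|$. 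The projection of $G$ to the first coordinate (or a fibre decomposition) then gives a common set $A \subset E$ and a subset $B' \subset B$ with $|B'| \ge \de^{O(\e)}|X|$ such that $\cn{A + xA} \le \de^{-O(c)}|A|$ for all $x \in B'$. This follows the approach of \cite[Theorem 3]{bou}. Since $B'$ is a dense subset of $X$, it still avoids sub-algebras by the strong hypothesis. Applying Theorem \ref{thm.babyproj} to $A$ and $B'$ gives a contradiction once $c$ is small relative to the constant produced there.

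The main obstacle is this uniformisation step. Bourgain's BSG argument must produce a single $A$, with the correct $(\de,s)$-non-concentration and $|A|\approx|G|^{1/2}$, valid for a positive-proportion Frostman subset of $B$ rather than for one $x$. The counterexamples given before the theorem show exactly what goes wrong without it: different $x$ may select disjoint pieces $G_0, G_1$ of $G$. The strong avoidance hypothesis rules this out only after passing to a dense subset of $x$'s. So the pigeonholing must keep $|B'| \ge \de^{O(\e)}|X|$, and not merely $\de^{O(c)}|X|$, in order for the strong avoidance definition to apply. I would first try to achieve this by taking $\e \ll c$ in the bookkeeping, since $c_0$ is allowed to depend on the parameters and $\e$ is taken small after $c$.
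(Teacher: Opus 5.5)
\textbf{The gap is the uniformisation step.} You flag it as the main obstacle but never carry it out. Nothing in your sketch produces a single $A$ with $\cn{A+xA}\le\de^{-O(c)}|A|$ for all $x$ in a set $B'$ with $|B'|\ge\dep|X|$. The random-point device (``$g_0$ lies in many $F_x$-rich fibres'') is not a construction, and it cannot work as stated. Any structured set you attach to an individual $F_x$ via Balog--Szemer\'edi--Gowers is only $\de^{O(c)}$-dense, so pigeonholing among them generically costs $\de^{O(c)}$ in the density of $B'$. But strong avoidance (with $C=\demm$) only applies to subsets of density at least $\depp$. Taking $\e\ll c$ does not rescue this. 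It is exactly the regime in which a $\de^{O(c)}$-dense subset falls outside the hypothesis, and $\e\ll c_0$ is forced anyway, since $c_0$ may not depend on $\e$.

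You also misdescribe the proof of Theorem \ref{thm.adultproj}: it does not run BSG on $\pi_x(F_x)$ for each $x$. The set $A$ is built once, from three bad directions $x_1,x_2,x_3$: one makes $\pi_{x_1},\pi_{x_2}$ the coordinate projections and applies BSG in direction $x_3$. Theorem \ref{thm.babyproj} is then used only to produce one $x$ with $A+xx_3^{-1}A$ large, which contradicts smallness of $\pi_x$ on a dense subset of the almost-product set $F$. So no common $A$ over many $x$ is needed. The real difficulty is forcing the bad pieces $F_x$ to lie inside that particular $F$, which has size only $\de^{O(c_0)}|G|$; an arbitrary $\depp$-dense $F_x\subset G$ may miss it entirely.

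\textbf{How the paper handles this.} The argument has two stages.

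First, it proves Proposition \ref{prop.adultproj}: there are $F\subset G$ with $|F|>\de^{Cc_0}|G|$ and $Y$ with $|Y|>(1-\depp)|X|$ such that, for $x\in Y$, every $F'\subset F$ with $|F'|>\depp|F|$ has large $\pi_x$-image. The negation of this quantifies over all such $F$. So after $A$, $B$, $F$ are built from $x_1,x_2,x_3$, it can be applied to this very $F$. This yields $X'$ with $|X'|>\depp|X|$ and a bad $F_x\subset F$ for each $x\in X'$. Then $X'$ avoids sub-algebras by strong avoidance, and Theorem \ref{thm.babyproj} plus the covering argument give the contradiction.

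Second, it exhausts $G$ by disjoint $F_1,F_2,\dots$, applying the proposition to what remains until less than $\de^{2\e}|G|$ is left, with good sets $X_j$. Markov's inequality applied to $x\mapsto\sum_{j:x\in X_j}|F_j|$ gives $Y$. Any $F$ with $|F|>\depp|G|$ then meets some $F_j$ with $x\in X_j$ in a $\dep$-fraction. This exhaustion is what replaces your uniformisation.

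\textbf{A further caveat}, affecting both your last step and the paper's Step 5. For non-product $G$, the directions fed to Theorem \ref{thm.babyproj} are the images of the bad $x$ under the normalising change of coordinates (in the paper, $L(X)x_3^{-1}$), not $B'$ itself. Avoidance of sub-algebras is not preserved by this fractional-linear map. For example, take $E=\C$, let $X$ be a $\de$-net of $i[\tfrac12,\tfrac34]$ (which strongly avoids $\R$), and let $G$ be a $\de$-net of $[0,\tfrac12]\times i[0,\tfrac12]$. Then $\pi_x(a,b)=a+xb\in\R$ for every $x\in X$, so $\cn{\pi_x(G)}\lesssim|G|^{1/2}$ for all $x\in X$. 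Some coordinate-invariant non-degeneracy assumption on $X$ is therefore needed before Theorem \ref{thm.babyproj} can be invoked there.
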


\begin{remark}
    We remark that we may consider the map $\pi_x$ defined by $(a,b) \mapsto a+bx$ with only superficial adjustments to the proofs. 
\end{remark}
\subsection*{Acknowledgements}
    The author is indebted to Pablo Shmerkin and Hong Wang. Thanks are extended to Joshua Zahl for all of his support.

\section{Preliminaries}
\subsection{Basic definitions}\label{subsect.defs}
 Throughout $E$ will be a normed division algebra 
 over $\Q_p$ or $\R.$ For two functions $f,g :(0,1] \rightarrow \R_{\geq 0} $ we write
 $$f \lesssim g \text{ if there exists } C > 0 \text{ such that } f \leq Cg. $$
 We write
 $$f \lessapprox g \text{ if for all $\ups > 0$ there exists } C > 0 \text{ such that } f \leq C\de^{-\ups}g. $$

 We say that a $A \subset E$ is a $(\de,s,C)$\textit{-set} if 
        $$\cn{A \cap B(x,r)} \leq Cr^s\cn A \text{ for all } x \in A, r \geq\de.$$ 
        Recall $N_\de(\ \cdot \ )$ denotes the covering number by balls of radius $\de.$ 

        An immediate corollary of this definition, which we shall use without mention, is that
$$\cns\rho A \geq \rho^{-s}/C$$ for all $\rho > \de.$ 
        We use $A_\de$ to denote the $\de$-neighbourhood of $A.$

        We use $\dy\de A$ to denote a partition of $A$ into cubes of side $\de.$

 We say that a $\de$-separated subset $A\subset E$ is \textit{uniform} if for all $\de < \rho$ and all $I,J \in \dy \rho A$ we have 
 $|A \cap I| \approx |A \cap J|.$
        
        Let $0 < \de < 1, 0 < s \leq d, C \geq 1.$ We say that a $(\de,s,C)$-set $A$ \textit{avoids sub-algebras} if for all sub-algebras $F \subset E$ there exists $a \in A$ so that
        \begin{equation}
            d(a,F) \geq 1/C.
        \end{equation}

        We say that a $(\de,s,C)$-set $A$ \textit{strongly avoids sub-algebras} if for all $B \subset A$ with $|B| \geq |A|/C,$ and for all sub-algebras $F \subset E$ there exists $b \in B$ so that
        \begin{equation}
            d(b,F) \geq 1/C.
        \end{equation}

        For an integer $N$ we write 
        $$NA = \overset{\times N}{A + \dots + A},$$
        and 
        $$A^{(N)} = \overset{\times N}{A \cdots A}.$$
 \subsection{Uniform subsets}
 We often use the Proposition below without reference, often replaced with the phrase `we find a dense uniform subset of -.'
    \begin{prop}\label{prop.uniform}
        Let $A \subset B(0,1)$ be $\de$-separated and let $\e > 0.$ Provided that $\de, \e >0$ are small enough, $A$ contains a uniform subset $A_0$ with $|A_0| > \de^\e|A|.$ In particular, if $A$ is a $(\de,s,\demm)$-set, then for all $\de < \rho < 1$ the set $A_0$ is a $(\rho,s,\dem)$-set.

    \end{prop}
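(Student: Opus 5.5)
The plan is the standard multiscale pigeonholing (regularisation) argument. Fix a large integer $T \sim \e^{-1}$ and work with the dyadic-type scales $\rho_j = \de^{j/T}$, $j = 0,1,\dots,T$. Since $E$ is a finite-dimensional vector space over $\R$ or $\Q_p$, the partitions $\dy{\rho_j}{E}$ are nested. For $\R$ we pass to a grid in which this holds after adjusting constants, at a cost of $O(1)$ factors per scale. For $\Q_p$, balls are nested automatically.

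The construction goes scale by scale from the top. Set $A^{(0)} = A$. Given $A^{(j)}$, we look at each cube $I \in \dy{\rho_j}{A^{(j)}}$ and count how many children $J \in \dy{\rho_{j+1}}{A^{(j)}}$ it has, i.e.\ cubes $J \subset I$ with $A^{(j)}\cap J \neq \emptyset$. This number is at most $\rho_j^d\rho_{j+1}^{-d} = \de^{-d/T}$. We pigeonhole it into dyadic ranges $[2^k,2^{k+1})$; there are $O(\log \de^{-1}/T)$ such ranges. Within each retained child $J$, we also pigeonhole the mass $|A^{(j)} \cap J|$ into dyadic ranges and keep only the $J$'s in one popular range. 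Concretely, we first keep the children in the most popular mass class for each parent $I$. Then we keep the parents $I$ whose number of surviving children lies in the most popular dyadic class, measured by total mass. Each step loses at most a factor $\lesssim \log(\de^{-1})$, or more precisely $O(\log \de^{-1})$ classes, hence $O(\log\de^{-1})$ per step.

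It is cleaner to run the argument from the finest scale upwards. First make all occupied $\rho_{T-1}$-cubes carry comparable numbers of points. Then make all occupied $\rho_{T-2}$-cubes contain comparable numbers of occupied $\rho_{T-1}$-cubes. Continue in this way. Since at each stage we only delete whole subcubes, uniformity already achieved at finer scales is preserved.

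After $T$ steps the surviving set $A_0$ satisfies $|A_0| \ge (C\log\de^{-1})^{-2T}|A|$. This is $> \de^{\e}|A|$ once $\de$ is small in terms of $T$, that is, in terms of $\e$. By construction, for each $j$ all $I,J\in \dy{\rho_j}{A_0}$ have $|A_0\cap I|$ equal up to a factor $2^{O(T)}$. For an arbitrary $\rho \in (\de,1)$, pick $j$ with $\rho_{j+1} < \rho \le \rho_j$. A $\rho$-cube meets at most $\de^{-O(d/T)}$ cubes of $\dy{\rho_{j+1}}{A_0}$ and is contained in $O(1)$ cubes of scale $\rho_j$. Hence $|A_0\cap I| \approx |A_0 \cap J|$ up to $\de^{-O(1/T)} = \de^{-O(\e)}$, which is the required $\approx$ once $T$ is chosen large in terms of the tolerance. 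This between-scales interpolation is the only slightly delicate point, and it is where $T$ must be tied to $\e$.

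For the second assertion, take a $\rho$-ball $B$ centred in $A_0$ and $r \ge \rho$. By uniformity, each occupied $\rho$-cube carries $\approx |A_0|/\cns\rho{A_0}$ points. Therefore
$$\cns\rho{A_0\cap B(x,r)} \approx \frac{|A_0\cap B(x,r)|\,\cns\rho{A_0}}{|A_0|} \le \frac{|A\cap B(x,2r)|\,\cns\rho{A_0}}{\de^{\e}|A|} \lesssim \de^{-2\e} r^s \cns\rho{A_0},$$
using the $(\de,s,\demm)$ property of $A$. This shows $A_0$ is a $(\rho,s,\dem)$-set.

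The main obstacle is bookkeeping rather than depth. The uniformity must survive passage to coarser scales, which is why we run from fine to coarse and delete whole cubes. The losses then remain $(\log \de^{-1})^{O(T)}$, and this is $\le \de^{-\e}$ for $\de$ small.
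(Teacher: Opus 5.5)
Your argument is correct and uses the same fine-to-coarse refinement as the paper. The difference is that you arrange the scale parameters the other way round, which moves the losses to different places.

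\textbf{The paper's arrangement.} It writes $\de=p^{-mT}$ with $T$ depending only on $\e$ (and $d,p$). It then runs $m\approx\log(1/\de)/(T\log p)$ steps, each spanning just $T$ levels. A parent has at most $p^{dT}$ children, so each pigeonholing uses $dT+1$ classes independently of $\de$. The total loss $(dT+1)^{-m}\ge p^{-\e mT}=\de^{\e}$ is secured by the choice of $T$ alone.

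\textbf{Your arrangement.} You run $T\sim 1/\e$ steps, each spanning a scale ratio $\de^{-1/T}$. Each step therefore costs $O(\log(1/\de))$ classes. The total loss $(C\log(1/\de))^{2T}$ beats $\de^{\e}$ only because $\de$ is small in terms of $\e$.

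\textbf{What each buys.} The paper's arrangement pays off at the interpolation step you flag as delicate. Consecutive pigeonholed scales differ by the bounded factor $p^{T}$, so an arbitrary intermediate $\rho$ costs only $p^{O(dT)}$, a constant depending on $\e$. Hence $A_0$ is uniform in the strict sense of $\approx$. In your arrangement that step costs $\de^{-O(d/T)}=\de^{-O(\e)}$. This is all the ``in particular'' clause needs, but it is not $\de^{-o(1)}$ for fixed $T$. If you want the literal $\approx$, let $T$ grow slowly with $\de$, e.g.\ $T\approx\sqrt{\log(1/\de)}$; then $(C\log(1/\de))^{2T}$, $2^{O(T)}$ and $\de^{-d/T}$ are all $\de^{-o(1)}$. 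Conversely, having only $T$ steps lets you pigeonhole numbers of children and let the comparability constants compound harmlessly to $2^{O(T)}$. Over $\sim\log(1/\de)$ steps one must instead pigeonhole the masses themselves, within a window of $O(dT)$ classes, to keep the constants bounded.

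\textbf{A small slip.} In your last display the first relation should be $\lessapprox$ with $B(x,O(r))$ in place of $B(x,r)$. An occupied $\rho$-cube meeting $B(x,r)$ may carry most of its points outside $B(x,r)$. Your next inequality already enlarges the ball, so the conclusion stands.
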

        We give a proof of this result when $E$ is a $d$-dimensional division algebra over $\Q_p.$ When $E \in \{\R,\C, \mathbb{H}\},$ a reference would be \cite[Lemma 2.15] {OrponenShmerkin23b}, or simply replace $p$ with $2$ in the proof below. 
        \begin{proof}[Proof of Proposition \ref{prop.uniform}] Let $m$ and $T$ be integers selected later, so that (without loss of generality) $\de = p^{-mT}.$ We perform a by now standard up-the-tree refinement. 

Select $A_m = A.$ Suppose that sets $A_m \supset A_{m-1} \supset \ldots \supset A_{m-j}$ have been selected so that for all $0 \leq k \leq j$ and $I,J \subset \dy{p^{-(m-k)}}{B(0,1)}$ 
 we have
$$p^{-1}|A_{m-j} \cap I| \leq |A_{m-j} \cap J| \leq p|A_{m-j} \cap I|.$$ 
We select $A_{m-j-1}$ as follows: $A_{m-j}$ is covered by the intervals contained in $\dy{p^{-(m-j-1))}}{B(0,1)}.$
Consider those sets with non-empty intersection with $A_{m-j},$ and call this collection $\cU.$ The number of elements $\dy{p^{-(m-j)}} {A_{m-j}}$ in each such interval is between $1$ and $p^{dT}.$ By pigeonholing, we may select an integer $1 \leq i_0 \leq dT+1$ and a $\cV \subset \cU$ so that for all $V \in \cV$ we have
$$ p^{i_0-1} \leq |A_{m-j} \cap V| < p^{i_0},$$
and
$$|A_{m-j} \cap \cup\cV| \geq \tfrac{1}{dT+1}|A_{m-j}|.$$
Select $A_{m-j-1} = A_{m-j} \cap \cup \cV.$ Once $A_0$ is selected we note that
\begin{equation}
    |A_0| \geq (dT+1)^{-m}|A|. 
\end{equation}
Provided that $m$ and $T$ are chosen appropriately large it will be the case that
$$\frac{1}{(dT+1)^m} \geq p^{-\e mT} = \depp.$$ The uniformity of $A_0$ is readily checked. 
        \end{proof}
\subsection{Escape from subspaces}
We recall \cite[Proposition 4]{nic}. 
    \begin{theorem}\label{thm.escape}
        Let $E$ be a normed division algebra of dimension $d$ over $\R$ or $\Q_p.$ Let $A \subset B(0,1)$ be such that for all $F \subset E$ there exists $a \in A$ so that $d(a,F) \geq \rho.$ Then there exists $v_1,\ldots,v_d \in A^{(d)}$ so that
        $$\{v_1,\dots,v_d\}$$
        forms an almost-orthonormal basis of $E,$ in the sense that 
        $$\det(v_1,\ldots,v_d) \geq \rho^{O(1)}.$$
    \end{theorem}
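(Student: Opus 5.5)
The plan is a quantitative version of a qualitative dimension-increment argument, with ``near a subalgebra'' replacing ``inside a subalgebra''. I read the hypothesis as ranging over proper subalgebras $F\subsetneq E$, including $F=\{0\}$ and the copy of the base field $K\cdot 1$.

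For $k\ge 1$ let $W_k=\operatorname{span}A^{(k)}$. I will first run the argument qualitatively.

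\emph{Monotonicity.} Pick $a_0\in A$ with $|a_0|\ge\rho$; this uses $F=\{0\}$. Left multiplication by $a_0$ is injective with norm $|a_0|$, since the norm is multiplicative. As $a_0A^{(k)}\subset A^{(k+1)}$, we get $\dim W_{k+1}\ge \dim W_k$.

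\emph{Stalling forces a subalgebra.} Suppose $\dim W_{k+1}=\dim W_k<d$. Then $aW_k=W_{k+1}$ for every $a\in A$ with $a\neq 0$, and likewise $W_ka=W_{k+1}$. Hence $a_0^{-1}a$ lies in the left stabiliser
$$S=\{x\in E: xW_k\subset W_k\}.$$
$S$ is a subalgebra: it is closed under sums and products and contains $1$. Since $W_k\neq E$ is a nonzero left $S$-module, $S$ is proper. By the same reasoning on the right, $aa_0^{-1}$ lies in the proper right stabiliser $S'$. Conjugating by $a_0$, the set $A$ lies in $a_0S$ and in $S'a_0$. I then want to produce a single proper subalgebra containing $A$, which contradicts the hypothesis. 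The candidate I would try first is the subalgebra generated by $a_0$ together with $S\cap a_0^{-1}S'a_0$; its properness should follow from a dimension count, because $W_k$ is simultaneously a left $S$-module and a right $S'$-module of dimension $<d$. I am least sure of this step and expect it to be the main algebraic obstacle.

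\emph{Conclusion of the qualitative argument.} Granting that step, the dimensions strictly increase until they reach $d$, so $W_d=E$. Replacing products of fewer factors by products with extra factors $a_0$ keeps everything inside $A^{(d)}$, so $A^{(d)}$ spans $E$.

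\emph{Quantitative version.} I would make this effective by tracking a volume. At stage $k$, choose $v_1,\dots,v_m\in A^{(k)}$ maximising the $m$-dimensional volume $\|v_1\wedge\cdots\wedge v_m\|$, where $m=\dim W_k$ taken at scale $\rho^{O(1)}$. I would define the ``robust dimension'' as the largest $m$ for which this volume is $\ge \rho^{C_k}$.

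Running the stalling argument at this scale, failure to increase means every $a\in A$ satisfies $aW_k\subset (W')_{\rho^{C}}$ for a fixed $m$-dimensional space $W'$. Then $a_0^{-1}a$ almost stabilises $W_k$. The set of almost-stabilisers is, up to $\rho^{O(1)}$ losses, close to a genuine stabiliser subalgebra, by a compactness argument over the Grassmannian, or explicitly via minors of the multiplication matrices. Consequently every $a\in A$ lies within $\rho^{O(1)}$ of a proper subalgebra. If the exponents are chosen so that this distance is $<\rho$, this contradicts the hypothesis.

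After at most $d$ steps the robust dimension is $d$, giving $\det(v_1,\dots,v_d)\ge\rho^{O(1)}$. I expect the constants to be $O_d(1)$, since the number of steps is at most $d$.

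The main obstacle is the algebraic step identifying a proper subalgebra from a stalled dimension, together with making the ``approximate stabiliser is near a true subalgebra'' statement effective with only polynomial losses in $\rho$. For the latter, I would first try a Łojasiewicz-type or compactness bound on the algebraic variety of subalgebras of $E$.
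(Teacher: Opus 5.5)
The paper gives no argument of its own here; it quotes \cite[Proposition 4]{nic}. Your proposal has a genuine gap at exactly the step you flagged, and that step is false, not merely unproved. Read the hypothesis, as you do, with $F$ ranging over proper subalgebras. Take $E=\mathbb{C}$ over $\mathbb{R}$ and $A=\{i\}$, or any $A\subset i\mathbb{R}\cap B(0,1)$ with a point of modulus $\ge\rho$. The only proper subalgebra is $\mathbb{R}$ (plus $\{0\}$ if you allow it), at distance $1$ from $i$, so the hypothesis holds. But $W_k$ alternates between $i\mathbb{R}$ and $\mathbb{R}$, so the dimension stalls at $1$ forever. Every $v\in A^{(2)}$ is real, giving $\det(v_1,v_2)=0$. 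In your notation $S=S'=\mathbb{R}$ and $A\subset a_0S=i\mathbb{R}$. This coset lies in no proper subalgebra, and your candidate (the algebra generated by $a_0$ and $S\cap a_0^{-1}S'a_0$) is all of $\mathbb{C}$. The same happens for $A\subset\sqrt{u}\,\mathbb{Q}_p$ in a quadratic extension of $\mathbb{Q}_p$. It also happens for $A=\{i,j\}\subset\mathbb{H}$, which satisfies the hypothesis with $\rho=2^{-1/2}$, while $A^{(4)}\subset\mathbb{R}+\mathbb{R}k$ by the $\mathbb{Z}/2\times\mathbb{Z}/2$-grading of $\mathbb{H}$. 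So, with $A^{(d)}$ meaning exactly $d$-fold products, the statement is false as written, and no choice of stabilisers can rescue the exact-length argument. Your quantitative paragraph inherits the same flaw, since ``almost stabilising $W_k$'' only places $A$ near the coset $a_0S$.

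The missing idea is to adjoin the unit. The correct statement takes $v_1,\dots,v_d\in\bigcup_{0\le k\le d-1}A^{(k)}$ with $A^{(0)}=\{1\}$. In Theorem \ref{thm.expand2} the result is applied to $Q\ni 1$, where this set is contained in $Q^{(d)}$. With the unit, the stalling step is immediate. Put $V_m=\operatorname{span}(\{1\}\cup A\cup\dots\cup A^{(m)})$, so that $V_{m+1}=V_m+\operatorname{span}(AV_m)$. If $V_{m+1}=V_m$, then $AV_m\subset V_m$, so $V_m\ni 1$ contains every product of elements of $A$. Hence $V_m$ is the subalgebra they generate, which must be $E$. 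Therefore $\dim V_m\ge\min(m+1,d)$ and $V_{d-1}=E$. Padding to exact length $d$ by powers of $a_0$ does not work: different lengths need different powers, and independence can be lost (above, $1$ and $i$ both become $-1$). For the quantitative version, grow an almost-orthonormal basis of $V_m$ out of such products, starting from $w_0=1$. If no $aw_j$ is $\rho^{C_m}$-far from $V_m$, then $A=A\cdot 1$ lies near $V_m$. Because coefficients in this basis are $\le\rho^{-O(1)}$, $V_m$ is then an approximate proper subalgebra. To replace it by a genuine subalgebra $F$ with $A\subset F_{\rho/2}$ at only polynomial cost, you need a \L{}ojasiewicz inequality (real or $p$-adic) for the multiplication defect on the variety of $m$-dimensional subalgebras in the Grassmannian. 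A bare compactness argument yields only an unquantified $\epsilon(\rho)$, not $\rho^{O(1)}$.
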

\section{Expansion under addition and multiplication}
\subsection{Parameters}
 For parameters $0 < s < d$ let $c_1 = c_1(s,d)$ be chosen as
\begin{equation}\label{eq.cone}
    c_1 =\frac{s(1-s/d)}{4}
\end{equation}
Since $s$ will represent the `dimension' of our discretised set, we use $s' = s/d$ to denote the normalised dimension with respect to the dimension of $E$ over its base field. 
\subsection{Non-field case}
We first prove a general expansion result where the number of products of $A$ needed to experience growth is grows linearly with $d.$ The dependence on $d$ for the number of products may be explained by an application of Theorem \ref{thm.escape}. 
\begin{theorem}\label{thm.expand}
    Let $E$ be a $d$-dimensional normed division algebra over $\Q_p$ or $\R.$ Let $0 < s < d.$ The following holds for all $\de,\e >0$ small enough. Let $A \subset B(0,1)$ be a $\de$-separated $(\de,s,\demm)$-set that avoids sub-algebras. Then
    \begin{equation}
        \cn{10(A^{(2d)}-A^{(2d)})} \geq \de^{-s-c_1}.
    \end{equation}
\end{theorem}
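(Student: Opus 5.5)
We follow the Garaev/Guth--Katz--Zahl strategy. Suppose for contradiction that $\cn{10(A^{(2d)}-A^{(2d)})} < \de^{-s-c_1}$. First pass to a dense uniform subset of $A$ using Proposition \ref{prop.uniform}; by uniformity $A$ still avoids sub-algebras up to $\de^{O(\e)}$ losses, and it is still a $(\rho,s,\dem)$-set at every scale. Apply Theorem \ref{thm.escape} to $A$ with $\rho = \dep$ to obtain $v_1,\dots,v_d \in A^{(d)}$ forming an almost-orthonormal basis, $\det(v_1,\dots,v_d)\ge \dep$. Consequently the map $(a_1,\dots,a_d)\mapsto \sum_i a_i v_i$ (or $\sum_i v_i a_i$) from $A^d$ into $E$ has $\dep$-bounded distortion, so sums of the form $\sum_i A v_i$ lie in $d(A^{(d+1)})$ and have $\de$-covering number comparable to $\cn{A}^d$ at every scale down to $\de$, up to $\dem$ factors. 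In particular at any scale $\rho$ the set $\sum_i Av_i$ essentially fills $B(0,1)$ when $\rho^{-s d}\gtrsim \rho^{-d}$ fails, i.e.\ it is ``$d$-dimensional-ish'' already.

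The main step is the Garaev-type pigeonhole on quotients. Consider the set $Q=\{(a-b)^{-1}(c-e): a,b,c,e\in A\}$ (with $|a-b|\ge \dep$ after a standard localisation). If some $x=v_i$-direction element $\xi$ in a suitable scale-$\rho$ ball is not in $Q$ up to $\rho$, then the map $(a,b)\mapsto a+\xi b$ is essentially injective on $A\times A$ at scale $\de$, giving $\cn{A+\xi A}\gtrsim |A|^2\de^{O(\e)}$; expressing $\xi$ in terms of elements of $A^{(d)}$ and $Q$ (via the dichotomy below) places $A+\xi A$ inside a bounded dilate of $10(A^{(2d)}-A^{(2d)})$, contradicting the assumption since $|A|^2\ge \de^{-2s+O(\e)}\gg\de^{-s-c_1}$. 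Otherwise $Q$ contains (at the relevant scale) a full box spanned by the basis, and we run the dichotomy: either $Q+Q\not\subset Q$, or $QA^{(d)}\not\subset Q$, up to $\rho$-neighbourhoods; if both inclusions held, $Q$ would be (discretely) closed under addition and multiplication by elements generating $E$, forcing $Q$ to be dense in a ball of radius $\gtrsim \dep$, which contradicts $\cns\rho Q \le \cns\rho{A}^{4}$-type bounds only at scales chosen correctly — so we choose $\rho$ with $\rho^{-d}$ larger than the upper bound on $\cns{\rho}{Q}$ coming from the assumed small doubling, using $s<d$. Each failure yields a $\xi$ of the form $q_1+q_2$ or $q a$ with $\xi\notin Q$, and $A+\xi A$ (after clearing denominators by multiplying by the $(a-b)$ factors) embeds into $10(A^{(2d)}-A^{(2d)})$ with $\dep$ distortion.

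The exponent bookkeeping: the ``escape'' at scale $\rho$ gives injectivity only at scale $\rho$, so we obtain $\cn{A+\xi A}\gtrsim \cns\rho A\cdot |A| \gtrsim \rho^{-s}|A|$-type gains; choosing $\rho=\de^{1-s/d}$-ish balances against the $\rho^{-d}$ box argument and yields the exponent $c_1=s(1-s/d)/4$. I expect the main obstacle to be this multi-scale step: making ``$Q$ is closed under $+$ and under multiplication by $A^{(d)}$'' rigorous at scale $\rho$ rather than $\de$, controlling the number of products (hence the $2d$) when clearing denominators non-commutatively, and ensuring the lower bound $\cns\rho{A+\xi A}\gtrsim|A\cap\text{ball}|\cdot\cns\rho A$ via uniformity of $A$.
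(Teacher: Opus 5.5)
There is a genuine gap. Your outline has the right ingredients: a basis $v_1,\dots,v_d\in A^{(d)}$ from Theorem~\ref{thm.escape}, a quotient set $Q$, and a dense/sparse dichotomy. But the steps you flag as the expected obstacle are exactly where the proof lives, and the mechanisms you propose for them fail.

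\textbf{The escape step.} Your $Q$ only contains quotients with denominators $|a-b|\ge\delta^{O(\epsilon)}$. A $\delta$-collision $a+\xi b\approx a'+\xi b'$ only puts $\xi$ within $\delta/|b-b'|$ of $(a'-a)(b-b')^{-1}$. (This is a right quotient, whereas your $Q$ uses left quotients.) That point lies in your $Q$ only when $|b-b'|\ge\delta^{O(\epsilon)}$. So escape of $\xi$ from $Q$ at scale $\rho$ excludes only collisions with $|b-b'|\gtrsim\delta^{O(\epsilon)}$, and the energy bound beats the trivial one by a factor $\delta^{O(\epsilon)}$ only. You get neither near-injectivity nor the $\rho^{-s}|A|$ of your last paragraph. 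Near-injectivity is impossible anyway once $s>d/2$, since then $|A|^2>\delta^{-d}$.

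More precisely, suppose $\xi=pq^{-1}$ and escape confines colliding quadruples in $Aq+Ap$ to $|a_3-a_4|\le R$. Then the count gives $|Y|\lesssim\delta^{-O(\epsilon)}(R/|q|)^s\delta^s|A|^4$, hence $N_\delta(pA+qA)\gtrsim\delta^{O(\epsilon)}(|q|/R)^s\delta^{-s}$. A gain therefore requires $R\ll|q|$, whereas your normalisation has $R\sim|q|\sim\delta^{O(\epsilon)}$.

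The paper gets the gain by coupling two scales. The denominator threshold in $Q$ is a small power $\rho$ of $\delta$, not $\delta^{O(\epsilon)}$, and escape is tested at the much finer scale $\Delta=\delta/\rho^3$. The escaping element is $f_{\mathbf i}(x)=pq^{-1}$ with $|q|\ge\rho$ and $p\in A-A+dA^{(d)}(A-A)$, and colliding quadruples are confined to $|a_3-a_4|\le\rho^2$. The paper then counts $pA+qA$ directly, rather than $A+\xi A$, which would force a division by the small $q$. This yields $N_\delta(pA+qA)\gtrsim\delta^{O(\epsilon)}\rho^{-s}\delta^{-s}$. This coupling is the delicate point, since the confinement radius must beat the lower bound on $|q|$, and it is absent from your proposal.

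\textbf{The dense step.} There are two further gaps here.

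First, over $\mathbb{R}$, approximate closure under addition at a fixed scale cannot be iterated, because the errors add up. Worse, your $Q$ is bounded by $\delta^{-O(\epsilon)}$ and contains $1$. So for $x\in Q$ with $|x|\ge\frac34\sup_{y\in Q}|y|$, the point $2x$ is at distance $\ge 1/2$ from $Q$. Thus the alternative $Q+Q\not\subset Q_\rho$ always holds and carries no information.

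The paper instead uses the contractions $f_{\mathbf i}(x)=\frac12\bigl(x+\sum_j i_jv_j\bigr)$, $\mathbf i\in\{0,1\}^d$, whose attractor is the parallelepiped $\sum_j v_j[0,1)$. Because they halve distances, the errors form a convergent geometric series. So if every $f_{\mathbf i}$ maps $Q$ into $Q_\Delta$, dyadic induction places the parallelepiped in $Q_{O(\Delta)}$, giving $N_\Delta(Q)\ge\delta^{O(\epsilon)}\Delta^{-d}$. Over $\mathbb{Q}_p$, the translations $x\mapsto x+v_j$ together with the ultrametric inequality do the same job. Multiplication by $A^{(d)}$ is not a closure condition in this proof. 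It enters only through $p$, which is where the products in the statement come from.

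Second, you never explain how a large $Q$ contradicts your hypothesis. The trivial bound $N_\rho(Q)\lesssim N_\rho(A)^4$ lies below $\rho^{-d}$ only when $s<d/4$. ``The upper bound coming from the assumed small doubling'' is precisely what has to be proved.

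The paper's device is the map $A\times Q\to A(A-A)\times A(A-A)$, $(a,x)\mapsto(as_x,at_x)$, where $x=s_xt_x^{-1}$ and $|t_x|\ge\rho$. A $\delta$-approximation of the image determines $x$ to within $\delta/\rho<\Delta$, and then $a$ to within $\delta/\rho$. This gives $N_\delta(A(A-A))^2\gtrsim N_{\delta/\rho}(A)\,N_\Delta(Q)$. Balancing this against the sparse-case bound is what fixes $\rho=\delta^{(d-s)/(3(d+s))}$.

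\textbf{A smaller error.} Your opening claim that $(a_1,\dots,a_d)\mapsto\sum_i a_iv_i$ has bounded distortion on $A^d$ is false. The $a_i$ lie in $E$, not in the base field. Indeed $N_\delta(A)^d\approx\delta^{-sd}$ already exceeds $\delta^{-d}$ once $s>1$.
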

We follow the strategy of \cite{gut} which is repeated in \cite{nic}.
\begin{proof}[Proof of Theorem \ref{thm.expand}]
\begin{steps}

\step{Pre-processing} We assume that $A$ is uniform. To avoid division by elements close to $0$ we consider $A_0 = A \setminus B(0,\de^{2(\e/s)}).$ Since
\begin{equation}
    |A \cap B(0,\de^{2(\e/s)})| \leq \de^{\e}|A| \ll |A|,
\end{equation}
the set $A_0$ remains a $(\de,s,\de^{-\e})$-set, provided $\de,\e$ are small enough. Re-label $A_0$ by $A.$

\step{Setup} Let $\de < \rho < \de^{1/3}$ be a parameter selected later. Select $\De = \de/\rho^3.$ Set 
\begin{equation}
    Q_1 = \{(a-b)(c-d)^{-1} : a,b,c,d \in A \text{ and }|c-d| > \rho\},
\end{equation}
\begin{equation}
    Q_2 = \{(c-d)^{-1}(a-b) : a,b,c,d \in A \text{ and }|c-d| > \rho\},
\end{equation}
and 
$$Q_0 = Q_1 \cup Q_2.$$
Without a total loss of generality, let us suppose that $|Q_1| \geq |Q_0|/2;$ set $Q$ to be a maximal $\De$-separated subset of $Q_1.$ Apply Theorem \ref{thm.escape} to find $v_1,\dots, v_d \in A^{(d)}$ with $\det(v_1,\dots,v_d) > \dep,$ so that $\{v_1,\dots,v_d\}$ forms a basis of $E.$ 
\step{Dense case}
The proof when $E$ is over $\Q_p$ or $\R$ now diverge. First we consider the case when $E$ is over $\R.$ For each $\vi = (i_1,\dots,i_d) \in \{0,1\}^d$ consider the map $f_\vi : E \rightarrow E$ defined by
\begin{equation}
    f_\vi(x) = f_\vi\left(\sum_{j=1}^d x_j v_j\right) =\sum_{j=1}^d \left(\frac{x_j+i_j}{2}\right) v_j
\end{equation}

Suppose first that for all $x \in Q$ and each $\vi \in \{0,1\}^d$ we have
\begin{equation}\label{eq.assumptiononQ}
d(f_\vi (x), Q) \leq \De.
\end{equation} 
The aim is to show that 
$$v_1[0,1) + \cdots + v_d[0,1)\subset Q_\De.$$ 
This will mean that 
\begin{equation}\label{eq.coveringQ}
\cn{Q_\De} \geq \dep \De^{-d}.
\end{equation}
We will do this by showing by induction that all numbers $\alp$ of the form
$$\alpha = \sum_{j=1}^d v_j k_j2^{-n}$$
for $0 \leq k_j < 2^{n}$ are contained in $Q_\De.$
Since $0 \in Q,$ the case $n=0$ follows after applying each $f_\vi$ to $0.$ Now fix 
$$\beta = \sum_{j=1}^d v_j k_j 2^{-n-1}$$
for $0 \leq k_j < 2^{n+1}.$ If $k_j < 2^n$ select $i_j = 0$ and $l_j = k_j.$ Otherwise select $i_j = 1$ and $l_j = k_j - 2^n.$ By the inductive hypothesis, 
$$\gamma  = \sum_{j=1}^dv_jl_j2^{-n} \in Q_\De,$$
and by design, 
$$f_\vi(\gamma) = \beta.$$
Therefore by \eqref{eq.assumptiononQ} $\beta \in Q_\De.$

Now let $$f: A \times Q \rightarrow A(A-A)\times A(A-A)$$ be defined by
$$f(a,x) = (as_x, at_x),$$
where for each $x$ we choose some representatives of our choice $s_x \in A-A$ and $t_x \in (A-A) \setminus B(0,\rho)$ so that $x = s_x t_x^{-1}.$

We now show that if we know a point in the image up to some error of $\sim\de,$ then we may locate its pre-image up to an error of $\sim \de/\rho.$ Let $(y,y')$ be in the image, and let 
$$(z,z') = (y+ O(\de), y' + O(\de)).$$ 
There exist $a \in A$ and $x = s_xt_x^{-1}$, with $ |t_x| \geq \rho$  so that $(y,y') = (as_x,at_x).$ 
Since $|t_x| \geq \rho$ we have
\begin{align}
zz'^{-1} &= (y+O(\de))(y' + O(\de))^{-1}\\
&= s_xt_x^{-1} + O(\de/\rho) \\
&= x + O(\de/\rho).
\end{align}
Once $x$ is determined, we know our choices $s_x$ and $t_x,$ therefore $$at_x = y' + O(\de),$$ and so $$a = y't_x^{-1} + O(\de/\rho).$$ Since $\De > \de/\rho$ using \eqref{eq.coveringQ} we therefore have
\begin{equation}
    \cn{A(A-A)}^2 \gtrsim \cns{\de/\rho}A\cns \De Q \geq \dep (\rho/\de)^s\De^{-d} = \dep\rho^{s+3d} \de^{-s-d},
\end{equation}
and so
\begin{equation}\label{eq.comparing}
    \cn{A(A-A)} \geq \dep \rho^{\frac{s+3d}{2}}{\de^{-\frac{s+d}{2}}}.
\end{equation}
\step{Sparse case}
Now suppose that there exists an $x \in Q$ and an $\vi$ so that $$d(f_\vi(x),Q) > \De.$$ Set $y = f_\vi(x),$ and let $s,t,u,v \in A$ with $|u-v| \geq \rho$ be so that $x = (s-t)(u-v)^{-1}.$ We perform the argument in the most superficially complicated case when $\vi = (1,\ldots,1).$ We have
\begin{align}
    y &=  \frac 1 2 \left(x + \sum_{j=1}^d v_j\right)\\
    &= \frac 1 2 \left((s-t) + \sum_{j=1}^dv_j(u-v)\right)(u-v)^{-1}\\
    &= \left((s-t) + \sum_{j=1}^dv_j(u-v)\right)(u-v+u-v)^{-1}.
\end{align}
We may therefore write $y = pq^{-1},$ where 
$$p \in A-A + dA^{(d)}(A-A),$$
and 
$$q \in (A-A)+(A-A),$$
with
$|q| \geq \rho.$
 The aim is now find an upper bound for the number of quadruples
\begin{equation}
 Y = \{(a_1,a_2,a_3,a_4) \in A^4 : |a_1q + a_3p - (a_2q + a_4p)| \leq \de\}. 
\end{equation}
For $(a_1,a_2,a_3,a_4) \in Y$ we must have
\begin{equation}
    |(a_1-a_2)(a_3-a_4)^{-1} - pq^{-1}| \leq \de|a_3-a_4|^{-1}|q|^{-1} \leq |a_3-a_4|^{-1}\de/\rho. 
\end{equation}
Since $d(pq^{-1},Q) > \De,$ we have $|a_3-a_4| \leq \rho^2.$ Fixing $a_3,$ by non-concentration, this gives us $\demm \rho^{2s}|A|$ choices for $a_4.$ Once $a_2,a_3,a_4$ are fixed then
\begin{equation}
    |a_1 - (a_2q-a_3p +a_4p)q^{-1}| \leq \de|q|^{-1} \leq \de/\rho,
\end{equation}
giving us $\demm (\de/\rho)^s|A|$ choices for $a_1.$ In summary,
\begin{equation}
    |Y| \leq \dem (\de/\rho)^s\rho^{2s}|A|^4 = \dem \de^s \rho^s |A|^4.
\end{equation}
An application of Cauchy--Schwarz gives
\begin{equation}
    \cn{(A-A)A + dA^{(d)}(A-A)A + (A-A)A + (A-A)A} \geq \cn{pA + qA} \geq \dep \rho^{-s}\de^{-s}.
\end{equation}
Now set $\rho = \de^{\frac{d-s}{3(d+s)}}.$ Comparing with \eqref{eq.comparing} we have,
\begin{equation}
    \cn{2d(A^{(2d)} - A^{(2d)})} \geq \dep \de^{-s\left(1+\frac{d-s}{3{d+s}}\right)} \geq \de^{-s-c_1}.
\end{equation}
\step{Modifications for $\Q_p$}
When $E$ is over $\Q_p$ we explain the necessary modifications. The proof here is slightly simpler and the number of sums of $A$ needed in the end is $O(1)$ and not $O(d).$ 

Recall that $\{v_1,\dots,v_d\}$ forms a basis of $E$ with 
\begin{equation}\label{eq.determinant}
\det(v_1,\dots,v_d) > \dep.
\end{equation}For each $1 \leq j \leq d$ consider the map $f_j : E \rightarrow E$ defined by
\begin{equation}
f_j(x) = x + v_j.
\end{equation}
Suppose first that for all $x \in Q$ and each $1 \leq j \leq d$ we have 
\begin{equation}\label{eq.contradiction1}
d(f_j(x), Q) \leq \De.\end{equation}
This will mean that 
\begin{equation}\label{eq.contradiction}
f_j(Q_\De) \subset Q_\De.
\end{equation}
To see this, take $y \in Q_\De,$ there exists $x \in Q$ with $d(x,y) \leq \De.$ By \eqref{eq.contradiction1} we find $z \in Q$ so that $d(x+v_j,z) \leq \De.$ Therefore, using the ultra-metric property, we have
$$d(y+v_j,z) \leq \max \{d(y+v_j,x+v_j), d(x+v_j,z)\} \leq \De,$$
verifying \eqref{eq.contradiction}.

We then claim that 
\begin{equation}
    v_1\Z_p + \cdots + v_d \Z_p \subset Q_\De.
\end{equation}
Suppose not. Since $0 \in Q_\De$ by \eqref{eq.contradiction} we have $v_1 + \dots + v_d \in Q_\De.$ By considering the lexicographical order, we may find $j$ and $n_1,\dots,n_d$ so that 
$$v_1n_1 + \dots + v_dn_d \in Q_\De,$$
but 
$$v_1n_1 + \dots + v_j(n_j+1) + \dots + v_dn_d \notin Q_\De.$$
This contradicts \eqref{eq.contradiction}. Since $\cns \De {\Z_p^d} = \De^{-d}$ by \eqref{eq.determinant} we have 
\begin{equation}
    \cns \De Q \geq \dep\De^{-d}. 
\end{equation}
We are then able to proceed similarly to before.

If there exists $x \in Q$ and $1 \leq j \leq d$ so that 
$$d(x + v_j, Q) \geq \De,$$
then we may write $x + v_j = p q^{-1}$ where
$p \in A-A + A^{(d)}(A-A)$
and 
$q \in (A-A) \setminus B(0,\rho).$
We then proceed as before. It is this part that guarantees the desirable $O(1)$ sums instead of $O(d).$
\end{steps}
\end{proof}
  \subsection{Field case}
We now prove an expansion result when $E$ is a field. The number of sums and products needed is independent of $d.$ Note that this statement is proved in Theorem \ref{thm.expand} for $\R$ and $\C$ (and $\mathbb{H}).$ The proof below could be modified to work for $\R$ and $\C$ should that be desirable to the reader.
\begin{theorem}\label{thm.expand2}
    Let $d$ be a positive integer. Let $E$ be a commutative $d$-dimensional extension of $\Q_p.$ Let $0 < s < d.$ The following holds for all $\de,\e >0$ small enough. Let $A \subset B(0,1)$ be a $\de$-separated $(\de,s,\demm)$-set that avoids sub-algebras. Then
    \begin{equation}
        \cn{10(A^{(10)}-A^{(10)})} \geq \de^{-s-c_1}.
    \end{equation}
    \end{theorem}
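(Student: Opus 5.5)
We follow the scheme of Theorem \ref{thm.expand}, keeping the pre-processing, the parameters $\rho$, $\De=\de/\rho^3$ and the set $Q$ of quotients $(a-b)(c-d)^{-1}$ with $|c-d|>\rho$. The dense/sparse dichotomy is kept, and the sparse case (Cauchy--Schwarz on the quadruple set $Y$) is reused verbatim. The only change is that we must avoid Theorem \ref{thm.escape}, which is where the $d$ products $A^{(d)}$ enter.

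The idea, as in \cite{nic}, is to replace the $d$ additive generators $v_j$ by closure of $Q$ (at scale $\De$) under the operations $x\mapsto x+1$ and $x\mapsto x\cdot q$ for $q$ in a bounded product set. Since $E$ is commutative, $Q$ is closed under inversion, so quotients can be rewritten as $p q^{-1}$ with $p,q$ in a sum of boundedly many products of $A-A$.

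\textbf{Step 1: the test maps.} After rescaling we may assume $1\in A-A$ up to $\dep$ losses; pick $a_0-b_0$ with $|a_0-b_0|\ge\rho$ and divide. We test the two maps
\[
x\mapsto x+1 \qquad\text{and}\qquad x\mapsto x\,(a-b)(c-d)^{-1},
\]
for $a,b,c,d\in A$ with $|c-d|>\rho$.

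\textbf{Step 2: sparse case.} Suppose some image of $x\in Q$ lies at distance $>\De$ from $Q$. Writing $x=(s-t)(u-v)^{-1}$ and using commutativity, the image has the form $pq^{-1}$. Here $p$ lies in a sum of $O(1)$ products of at most $3$ differences, and $q$ is a product of at most $2$ differences with $|q|\ge\rho^2$. The quadruple count then goes through as before with $\rho$ replaced by $\rho^{O(1)}$. This yields
\[
\cn{pA+qA}\ge\dep\rho^{-O(s)}\de^{-s}\cdot\rho^{O(1)},
\]
and $pA+qA\subset 10(A^{(10)}-A^{(10)})$.

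\textbf{Step 3: dense case.} Suppose instead that $Q_\De$ is (approximately) invariant under both maps. Then, as in the ultrametric argument of Theorem \ref{thm.expand}, $Q_\De\supset \Z_p$. Invariance under multiplication by $Q$ then gives $Q_\De\supset \Z_p[Q]$ at scale $\De$, reading off invariance iteratively for $O(1)$ steps of length determined by the ultrametric structure.

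The main obstacle is to conclude that $\Z_p[Q]$ is large. Since $A$ avoids sub-algebras and $E$ is a field, the subfield generated by $Q$ over $\Q_p$ is all of $E$. We need a quantitative version: $Q_\De$ contains $\dep\De^{-d}$ $\De$-separated points. We would try to show this by arguing that if the additive subgroup generated at scale $\De$ has small covering number, it is (approximately) a $\Z_p$-module closed under multiplication by $Q$. That makes it a $\Z_p[Q]$-module, hence an $\mathcal{O}$-submodule for an order $\mathcal{O}\subset E$ containing elements far from every proper subfield. This is precisely the quantitative escape statement that must be checked without invoking Theorem \ref{thm.escape}.

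\textbf{Step 4: conclusion.} With $\cns\De Q\ge\dep\De^{-d}$, the injectivity argument for $f(a,x)=(as_x,at_x)$ gives \eqref{eq.comparing}. Balancing against the sparse bound with $\rho=\de^{\frac{d-s}{O(d+s)}}$ yields the gain $\de^{-c_1}$, possibly after adjusting constants so that the exponent matches $c_1$.
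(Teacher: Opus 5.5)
Your Step 3 does not close. You never establish $\cns\De Q\geq\dep\De^{-d}$, and you explicitly leave it as the quantitative escape statement still to be checked.

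The missing idea is that Theorem \ref{thm.escape} does not have to be avoided. In Theorem \ref{thm.expand} the factor $A^{(d)}$ enters only because the escape vectors $v_j\in A^{(d)}$ are used as translations in the test maps, so they end up in the numerator $p$ of the sparse case. The paper instead tests only $x+y$ and $xy$ for $x,y\in Q$. The sparse case then costs $u\in 2(A-A)(A-A)$, $v\in (A-A)(A-A)$, with no dependence on $d$. In the dense case the ultrametric inequality upgrades the hypothesis to $Q_\De+Q_\De\subset Q_\De$ and $Q_\De Q_\De\subset Q_\De$. Theorem \ref{thm.escape} is then applied to $Q$ itself. The vectors $v_1,\dots,v_d\in Q^{(d)}$ it produces lie in $Q_\De$ by multiplicative closure. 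Together with $\Z_p\subset Q_\De$ and additive closure this gives $\Z_p v_1+\dots+\Z_p v_d\subset Q_\De$, hence $\cns\De Q\gtrsim\dep\De^{-d}$. So the $d$-fold products only ever appear in the dense case, where they cost nothing.

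For this you need that $Q$ avoids sub-algebras at level $\dep$, not merely that $Q$ generates $E$ as a field. The paper checks this by fixing $a,b\in A$ with $|a-b|\geq\de^{2\e/s}$. It then uses the ultrametric inequality to show that $(A-A)(a-b)^{-1}$ contains points $\dep$-far from any proper sub-algebra.

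Your choice of test maps is also too weak for the ring structure you invoke. Invariance of $Q_\De$ under $x\mapsto x+1$ and $x\mapsto xq$ with $q\in Q$ only yields Horner-type elements $n_0q_1\cdots q_k+n_1q_2\cdots q_k+\dots+n_k$ with $n_i\in\Z_p$. Their monomials form a divisibility chain, so you do not get, for instance, $q_1+q_2\in Q_\De$. Hence the inclusion $Q_\De\supset\Z_p[Q]$ is unjustified. One could try to reach full rank with a cleverly chosen word $q_1\cdots q_k$, but that is a further quantitative argument you have not supplied. Testing $x\mapsto x+y$ for every $y\in Q$, as the paper does, removes this problem at bounded cost in the sparse case.

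A minor point: $Q$ is not closed under inversion, because the constraint $|c-d|>\rho$ is not symmetric in numerator and denominator. You only need commutativity to write the images as $pq^{-1}$, so this claim should simply be dropped.
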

    \begin{proof}
        \begin{steps}

\step{Pre-processing} As in the proof of Theorem \ref{thm.expand}.
\step{Setup}
Let $\de < \rho < \de^{1/3}$ be a parameter selected later. Select $\De = \de/\rho^3.$ Set 
\begin{equation}
    Q= \left\{\frac{a-b}{c-d} : a,b,c,d \in A \text{ and }|c-d| > \rho\right\}.
\end{equation}
\step{Dense case}
Suppose that for all $x,y \in Q$ we have
\begin{equation}
    d(x+y,Q) \leq \De \text{ and } d(xy,Q) \leq \De.
\end{equation}
We will use the ultra-metric property of the metric on $E$ to show that 
\begin{equation}\label{eq.add}
Q_\De + Q_\De \subset Q_\De
\end{equation}
and 
\begin{equation}\label{eq.mult}
Q_\De Q_\De \subset Q_\De.
\end{equation}

Let $x,y \in Q_\De.$ Then there are $z,w \in Q$ so that 
$$d(x,z) \leq \De \text{ and } d(y,w) \leq \De.$$
Using the ultra-metric property, we have
\begin{equation}
    d(x+y,Q) \leq \max \{d(x+y,x+w), d(x+w,z+w), d(z+w,Q)\} \leq \De. 
\end{equation}
Similarly, 
\begin{equation}\label{eq.closure}
    d(xy,Q) \leq \max\{ d(xy,xw), d(xw,zw), d(zw,Q)\} \leq \De,
\end{equation}
using the fact that $A \subset B(0,1).$

Since $0,1 \in Q,$ stability under addition means that $\Z_p \subset Q_\De.$

We now show that $Q_\De$ is not trapped in any proper sub-algebra. For any interval $I$ of length $ \de^{2(\e/s)}$ by the non-concentration of $A$ we have
$$|A \cap I| \leq \de^\e|A|,$$
and so we have
$$\cns {\de^{2(\e/s)}} {A} \geq \demm.$$
We may therefore find $a,b \in A$ with \begin{equation}\label{eq.choices}|a-b| \geq \de^{2(\e/s)}.\end{equation} Consider
$$\frac{A-A}{a-b},$$
and let $F \subset E$ be a proper sub-algebra. Then $(a-b)F$ is also a proper sub-algebra. Since $A$ avoids sub-algebras we may find $c \in A$ so that $d(c,(a-b)F) \geq \depp.$ Let $d$ be another element in $a$ with $|d| \leq d(c,(a-b)F).$ Let $f \in F$ be such that $d(c,(a-b)f) = d(c,(a-b)F).$ Then using the ultra-metric property, we have
 $$d(c-d,(a-b)f) = \max \{d(c-d,(a-b)f-d), d((a-b)f,(a-b)f-d)\} = d(c,(a-b)f) \geq \de^\e. $$
 Therefore,
$$d((c-d)(a-b)^{-1},F) = |a-b|^{-1} d((c-d), (a-b)F) \geq \min\{1,\de^{\e (1-2/s)}\} \geq \depp.$$
We may therefore apply Theorem \ref{thm.escape} to $Q$ to find $v_1,...,v_d \in Q^{(d)}$ so that that
$$\cns\De {v_1Q + \dots + v_d Q }\gtrsim \dep \De^{-d}.$$ Since $Q_\De$ is closed under addition and multiplication we have
\begin{equation}
    \cns\De {Q }\gtrsim \dep \De^{-d}.
\end{equation}
Repeating the argument as in the proof of Theorem \ref{thm.expand}, we obtain that 
\begin{equation}
    \cn{A(A-A)} \geq \dep \rho^{\frac{s+3d}{2}}{\de^{-\frac{s+d}{2}}}.
\end{equation}
\step{Sparse case} Now suppose that there exist $x,y \in Q$ so that either 
$$d(x+y,Q) > \De \text{ or } d(xy,Q) > \De.$$ 
If the former occurs, write
$$x+y = \frac u v$$
where $$u \in 2(A-A)(A-A) \text{ and } v \in (A-A)(A-A).$$ If the latter occurs, write
$$xy = \frac u v, $$
where $$u,v \in (A-A)(A-A).$$
Similarly to the sparse case in the proof of Theorem \ref{thm.expand} we obtain
\begin{equation}
   \cn{3(A-A)(A-A)A} \geq \cn{uA + vA} \gtrsim \dep \rho^{-s}\de^{-s}.
\end{equation}
Choosing $\rho = \de^{\frac{d-s}{3(d+s)}}$ we obtain in all cases.
\begin{equation}
    \cn{10(A^{(10)} - A^{(10)})} \geq \dep \de^{-s\left(1+\frac{d-s}{3{d+s}}\right)} > \de^{-s-c_1}.
\end{equation}
\end{steps}
    \end{proof}

\subsection{An iterative expansion estimate}
    
\begin{corollary}\label{cor.expand}
      Let $E$ be a $d$-dimensional normed division algebra over $\Q_p$ or $\R.$ Let $0 < s < d.$ The following holds for all $\de,\e >0$ small enough. Let $A \subset B(0,1)$ be a $(\de,s,\demm)$-set which avoids sub-algebras. Then $$20(A^{(2d)} - A^{(2d)}) \cap B(0,1)$$ contains a $(\de,s+c_1/2,\demm)$-set.
      Provided that $E$ is not a non-commutative division algebra over $\Q_p,$ then we may replace $2d$ with $10.$
      \end{corollary}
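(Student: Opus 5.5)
The plan is to turn the covering-number growth of Theorem \ref{thm.expand} (or Theorem \ref{thm.expand2} in the commutative $\Q_p$ case) into non-concentration at every scale. The key inputs are that $A$ can be taken uniform (Proposition \ref{prop.uniform}) and that growth is available at every intermediate scale. Write $S = 10(A^{(2d)}-A^{(2d)})$, so that $S + S \subset 20(A^{(2d)}-A^{(2d)})$. Since $A \subset B(0,1)$, we can rescale by a fixed constant $\lambda$ depending only on $d$: if $\lambda$ is small enough that $\lambda A \subset B(0,1)$ with $|\lambda|^{4d}\cdot 20 \le 1$, then all the sets involved stay inside $B(0,1)$. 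Applying everything to $\lambda A$ costs only constants, so I will ignore the intersection with $B(0,1)$.

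\emph{Step 1 (growth at all scales).} Pass to a dense uniform subset of $A$. By Proposition \ref{prop.uniform}, for every $\rho\in[\de,1]$ the set $A$ is then a $(\rho,s,\rho^{-O(\e)})$-set. Avoidance of sub-algebras is a scale-$1$ property, so it survives at every scale. I would apply Theorem \ref{thm.expand} to a maximal $\rho$-separated subset of $A$. For $\rho \le \de^{\e^{1/2}}$ this gives
$$\cns{\rho}{S} \ge \rho^{-s-c_1}.$$
For coarser scales $\rho > \de^{\e^{1/2}}$ the required non-concentration inequality is trivial up to a loss of $\de^{-O(\e^{1/2})}$, which is absorbed into $\demm$ after relabelling $\e$.

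\emph{Step 2 (building the set).} Fix a sequence of scales $\de = \rho_K < \rho_{K-1} < \dots < \rho_0 = 1$ with $\rho_{k+1} = \rho_k^{1+\eta}$ for a small $\eta = \eta(\e)$. I want a $\de$-separated set $P \subset S+S$ that branches uniformly at every step. Concretely, each $\rho_k$-cube meeting $P$ should contain $\approx (\rho_k/\rho_{k+1})^{s+c_1/2}$ cubes of side $\rho_{k+1}$ meeting $P$. Such a tree is automatically a $(\de,s+c_1/2,\de^{-O(\eta)-O(\e)})$-set. To produce the branching inside a single $\rho_k$-cube, I use the additive structure: for $a_0\in A$ and $r=\rho_k$, the local piece $A\cap B(a_0,r)-a_0$, rescaled by $r^{-1}$, is by uniformity again a $(\de/r,s,\dem)$-set in $B(0,1)$.

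To see that it avoids sub-algebras, I argue as in the proof of Theorem \ref{thm.expand2}. Dividing differences by a fixed difference $a-b$ of size $\approx r$ (available by non-concentration) keeps us away from every proper sub-algebra. Dividing by a fixed element is harmless here, because the set $\{(c-d)(a-b)^{-1}\}$ lies in the ring generated by $A$ and its inverses on scales we control. Applying Theorem \ref{thm.expand} to this rescaled piece and multiplying back by the fixed element $(a-b)$ produces, inside a translate of $S$ localised to a ball of radius $\approx r$, at least $(r/\rho_{k+1})^{s+c_1}$ distinct $\rho_{k+1}$-cubes.

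\emph{Step 3 (assembly).} Combine a coarse net of $S$ at scale $\rho_k$ with the local fine sets via sums. Here I use that $S+S$ contains $x + (\text{local piece})$ for each $x$ in the coarse net, and I select one local contribution per coarse cube, pigeonholing to keep the branching numbers uniform. I then iterate down the scales. The loss from $c_1$ to $c_1/2$ pays for the pigeonholing and for the $\dep$ losses at each of the $K = O_\eta(\log\log(1/\de))$ steps.

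The main obstacle is the localisation in Step 2. Products do not commute with rescaling, since multiplying $2d$ elements of a set at scale $r$ changes sizes by $r^{2d}$ rather than by $r$. So I must make sure the expanded local set really sits in a $\lesssim r$-ball of $S+S$ and at the correct scale. The first thing I would try is to normalise by a single fixed difference $(a-b)^{-1}$ with $|a-b|\approx r$, and to count growth for $(A-A)(a-b)^{-1}$ at scale $\de/r$. After multiplying back by $(a-b)$, only boundedly many products of genuinely small elements appear, so the scale bookkeeping should close. If it does not, the fallback is to run the sparse/dense dichotomy of Theorem \ref{thm.expand} directly at each scale $\rho_k$ with $\De$ adapted to $\rho_k$.
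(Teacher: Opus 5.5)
Your Step 1 coincides with the paper's first step, and you handle the scales $\rho>\de^{\e^{1/2}}$ more carefully than the paper does. The argument fails at Step 2, which is where all the new information is supposed to enter.

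\emph{First}, uniformity plus the $(\de,s,\demm)$ hypothesis only gives non-concentration at coarser resolutions: $A$ is a $(\rho,s,\dem)$-set for $\rho\in(\de,1)$, as in Proposition \ref{prop.uniform}. It gives no lower bound on the branching of $A$ between scales $r$ and $\de$. For example, a $\de^{s/d}$-net of $B(0,1)$ is a uniform $(\de,s,O(1))$-set avoiding sub-algebras, yet its pieces at scale $r=\de^{s/d}$ have $O(1)$ points. So $r^{-1}(A\cap B(a_0,r)-a_0)$ need not be a $(\de/r,s,\dem)$-set.

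\emph{Second}, avoiding sub-algebras does not pass to local pieces. In $\C$, let $A$ be a union of $\de$-nets of horizontal segments at $r$-separated heights. Then $(A\cap B(a_0,r)-a_0)(a-b)^{-1}$, with $a,b$ in the same piece, lies in $\R$. The argument in Theorem \ref{thm.expand2} gives $d(c,(a-b)F)\ge\depp$ at scale $1$, and it uses ultrametricity. What you would need is $d(c-a_0,(a-b)F)\gtrsim\depp r$ for $c$ in the same $r$-ball, and that does not follow.

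\emph{Third}, and this is the obstacle you flag yourself, products do not localise. Even if Theorem \ref{thm.expand} applied to the normalised piece $A'$, it controls $10(A'^{(2d)}-A'^{(2d)})$. Its elements are sums of products $(c_1-a_0)(a-b)^{-1}\cdots(c_{2d}-a_0)(a-b)^{-1}$, and one factor of $(a-b)$ does not clear them. In the commutative case you need $(a-b)^{2d}$. That puts the output in a ball of radius $\approx r^{2d}$ at resolution $\approx\de r^{2d-1}$, which is below $\de$. Expanding $\prod_i(c_i-a_0)$ also costs $2^{2d}$ monomials, so you leave $S+S$. In the non-commutative case the interleaved inverses cannot be removed at all.

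Your fallback of running the dichotomy at each $\rho_k$ only reproduces Step 1: a covering bound for the whole set at scale $\rho_k$, not branching inside each $\rho_k$-cube. And even with local pieces available, Step 3 adds one summand per level. The assembled set would then sit in roughly $K\cdot S$ rather than in $20(A^{(2d)}-A^{(2d)})$, with the number of sums depending on $\e$ or $\de$. The statement, and its iteration in Proposition \ref{prop.expansion}, does not allow that.

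The paper never looks inside $\rho$-cubes; at each scale it uses only the global bound $\cns\rho S\ge\rho^{-s-c_1}$ from Step 1. It pigeonholes a ball $B$ of radius $1/2$ and translates by some $x\in S\cap B$, so that $(S\cap B)-x\subset(S-S)\cap B(0,1)\subset20(A^{(2d)}-A^{(2d)})\cap B(0,1)$. It then passes to a dense uniform subset $X$ and reads off $|X\cap B|\lessapprox|X|\cns\rho X^{-1}$. The translation is also the correct way to land in $B(0,1)$. Your rescaling by $\lambda$ instead produces subsets of $\lambda^{2d}\cdot20(A^{(2d)}-A^{(2d)})$, which is a different set.

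Your underlying concern about passing from covering numbers to non-concentration is legitimate. The paper's last inequality tacitly needs $\cns\rho X\gtrsim\dep\rho^{-s-c_1/2}$ for the uniform subset $X$, and density in cardinality alone does not give that. But the net example above shows that local pieces of $A$ can carry no information at all, so that is not where the missing input can come from.
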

      We give a proof in general using Theorem \ref{thm.expand}. It is easy to see that $2d$ may be replaced with $10$ if we use Theorem \ref{thm.expand2} instead.
        \begin{proof}
            Replace $A$ with a dense uniform subset. Then for each $\de < \rho < 1$ the set $A$ is a $(\rho,s,\dem)$-set. Apply Theorem \ref{thm.expand} at scale $\rho$ to find that 
            \begin{equation}
                \cns{\rho}{10(A^{(2d)}-A^{(2d)})} \geq \rho^{-s-c_1}.
            \end{equation}
            Therefore, there is a ball $B$ of radius $1/2$ so that
            \begin{equation}
                \cns{\rho}{10(A^{(2d)}-A^{(2d)})\cap B} \geq \rho^{-s-c_1/2}.
            \end{equation}
        We may find $x \in 10(A^{(2d)} - A^{(2d)})$ so that
        $$10(A^{(2d)} - A^{(2d)}) - x \subset B(0,1).$$
            Replacing
            $$10(A^{(2d)}-2d^{(2d)})$$
            with 
            $$10(A^{(2d)}-A^{(2d)}) - 10(A^{(2d)}-A^{(2d)}) \subset 20(A^{(2d)}-A^{(2d)})$$
            therefore ensures that 
            \begin{equation}
                \cns{\rho}{ 20(A^{(2d)}-A^{(2d)})\cap B(0,1)} \geq \rho^{-s-c_1/2}.
            \end{equation}
            Now replace $20(A^{(2d)}-A^{(2d)}) \cap B(0,1)$ with dense uniform subset $X.$ It follows that for each ball $B$ of radius $\rho$ which intersects $A$ we have
    \begin{equation}
        |X \cap B| \leq |X|\cns \rho X ^{-1} \leq \demm \rho^{s+c_1/2}|X|,
    \end{equation}
        as required.
        \end{proof}
        Recall for $s,t$ we denote $s' = s/d, t' = t/d.$
        \begin{prop}\label{prop.expansion}
              Let $0 < s < t < d.$ Let $A \subset B(0,1)$ be a $(\de,s,\demm)$-set that avoids sub-algebras. Then there exists a positive integer $N = N(s,t,d)$ so that 
            $$N(A^{(N)} - A^{(N)}) \cap B(0,1)$$
            contains a $(\de,t,\demm)$-set. Further, provided that $E$ is not a non-commutative extension of $\Q_p,$ then $N$ will only depend on $s,s,t,t'.$  
\begin{proof}
    Apply Corollary \ref{cor.expand} to $A.$ Let $M_0,N_0$ be the positive integers given to us so that
    $$M_0(A^{(N_0)} - A^{(N_0)}) \cap B(0,1)$$
    contains a $(\de,s+c_1/2,\demm)$-set. It is important to note that $M_0$ and $N_0$ do not depend on $s.$ If $s+c_1/2 \geq t$ then we are of course done. Otherwise, we keep applying Corollary \ref{cor.expand}, where after $n$ iterations, the set outputted will be a $(\de,s_n,\demm)$-set, where $s_n$ is given by the recursive formulae:
    $$s_0 = s;$$ 
    $$s_n = s_{n-1} + c_1(s_{n-1})/2.$$

    Suppose first that $t < d/2.$ Since
    $$c_1(s_n) > c_1(s_{n-1}) > \dots > c_1(s), $$
    for the number $n$ so that $s_n \geq t,$ we have   
    $$n \sim \frac{t-s}{c_1(s)}.$$
When $t \geq d/2$ we iterate 
$$\sim \frac{d/2-s}{c_1(s)}$$
times until the output contains a $(\de,d/2,\demm)$-set. After which, $c_1$ is a decreasing function, but still bounded below by $c_1(t).$ We therefore need another 
$$\sim \frac{t-d/2}{c_1(t)}$$
iterations so the output contains a $(\de,t,\demm)$-set. In either case, we may take 
\begin{equation}\label{eq.numberofiterations}
n = n(s,s',t,t') \sim \max \left\{ \frac{t-s}{c_1(s)}, \frac{t-s}{c_1(t)}\right\}.
\end{equation}

Now set $B= A^{(N_0)}.$ Note that 
$$(M_0(B-B))^{(N_0)} \subset M_0^{(N_0)}(B-B)^{(N_0)} \subset M_0^{(N_0)}(B^{(N_0)} - B^{(N_0)}) = M_0^{(N_0)}(A^{(N_0)^2} - A^{(N_0)^2}).$$
Therefore setting $$C = M_0(A^{(N_0)} - A^{(N_0)}),$$
we have
$$M_0(C^{(N_0)} - C^{(N_0)}) \subset 2M_0  M_0^{N_0}(A^{(N_0)^2} - A^{(N_0)^2}) \subset M_0^{2N_0}(A^{(N_0)^2} - A^{(N_0)^2}).$$
Iterating $n$ times we obtain that 
$$M_0^{2^nN_0^n}(A^{(N_0^n)}- A^{(N_0^n)}) \cap B(0,1)$$
contains a $(\de,t,\demm)$-set. Provided that $E$ is not a non-commutative extension of $\Q_p$ then from Corollary \ref{cor.expand} we know that $M_0, N_0 \sim 1,$ in particular we may take
$$N = 20^{20^n},$$
in light of \eqref{eq.numberofiterations}. Otherwise, we may take 
$$N = 20^{(4d)^n}.$$
\end{proof}
        \end{prop}

    \section{Projections of Cartesian products}
    Below is a variant of \cite[Exercise 2.8.4]{tao} adapted to the discretised setting.

\begin{lemma}\label{lem.tv}
    Let $0 < s \leq \sigma < t \leq d.$ There exists $c = c(s,t,\sigma) > 0$ so that the following holds for all $\de,\e >0$ small enough.
    
    Let $A \subset B(0,1)$ be a $(\de,s,\demm)$-set with $|A| = \de^{-\sigma}.$ Let $X \subset B(0,1)$ be a $(\de,t,\demm)$-set. There exists an $x \in X$ so that
    $\cn{A + xA} > \de^{-c}|A|.$
    \begin{proof}
 We aim to bound the cardinality of the the set
        \begin{equation}
            Q = \{(a,b,c,d,x) \in A^4\times X : |a+xb - (c-xd)| \leq \de \}.
        \end{equation}
        Let $\rho \geq \de$ be selected later. Suppose that the quintuples with $|b-d| \leq \rho$ make up at least half of all quintuples. Fix $a,b \in A$ and $x \in X.$ By the non-concentration of $A,$ the number of admissible $d$ is at most $\de^{-\e}\rho^s|A|.$ Now $a,b,d,x$ are fixed, and $c$ must now satisfy 
        $$|c - (a+ xb - xd| \leq \de,$$
        that is, there is at most one choice for $c.$ 
        Therefore, in this case we have,
        \begin{equation}
            |Q| \lesssim \de^{-\e}\rho^s |A|^3|X|. 
        \end{equation}
        
        In the other case, when at least half of the quintuples of $Q$ satisfy $|b-d| \geq \rho,$ we have
        \begin{equation}
            |Q| \lesssim |\{(a,b,c,d,x) \in A^4\times X : |x - (a-c)(b-d)^{-1}| \leq \de/\rho \}|.
        \end{equation}
Fix $a,b,d \in A$ with $|b-d| \geq \rho,$ of which there are at most $|A|^3$ choices. Since $x \in B(0,1),$ we must have $|a-c| \leq \rho,$ so there are $\demm\rho^s |A|$ choices for $c.$ With $a,b,c,d$ selected, this leaves $\demm (\de/\rho)^t |X|$ choices for $x.$ Therefore
\begin{equation}
    |Q| \lesssim \de^{-2\e}\de^t \rho^{s-t}|A|^4|X|
\end{equation}
Now select $\rho = \de^{\frac{t-\sigma + \e}{t}}.$ We then have
\begin{align}
        |Q| &\lesssim \demm\de^{\frac{s(t-\sigma + \e)}{t}}|A|^3|X| + \de^{-2\e}\de^{t+\frac{(s-t)(t-\sigma + \e)}{t}}|A|^4|X| \\ 
    & = \demm\de^{\frac{s(t-\sigma + \e)}{t}}|A|^3|X| + \de^{-2\e}\de^{t- (t-\sigma + \e) +\frac{s(t-\sigma + \e)}{t}}|A|^4|X|\\
    &=  \demm\de^{\frac{s(t-\sigma + \e)}{t}}|A|^3|X|.
\end{align}
Set $c = \frac s t (t-\sigma + \e) > 0$ provided that $\e$ is small enough. Therefore there exists an $x \in X$ so that
\begin{align}
    |\{(a,b,c,d) \in A^4 : |a+xb - (c-xd)| \leq \de \}| &\lesssim \de^c |A|^3.
\end{align}
A simple argument using Cauchy--Schwarz completes the proof. 
    \end{proof}
\end{lemma}
We now prove Theorem \ref{thm.babyproj}, where we follow the strategy of the proof of \cite[Theorem 1.2]{o2025simple}.
\begin{proof}[Proof of Theorem \ref{thm.babyproj}]
     
    Consider $X_0 = X \setminus B(0,\de^{2(\e/t)}).$ Since
\begin{equation}
    |X \cap B(\de^{2(\e/t)})| \leq \de^{\e}|X| \ll |X|,
\end{equation}
the set $X_0$ remains a $(\de,t,\de^{-\e})$-set, provided $\de,\e$ are small enough. Replace $X$ with $X_0.$ When $t > \sigma$ the result follows from Lemma \ref{lem.tv}. Otherwise, let $$\sigma < u < \sigma +\e$$ and apply Proposition \ref{prop.expansion} to find a positive integer $N$ and 
    \begin{equation}
        Y \subset (NX^{(N)} - NX^{(N)})\cap  B(0,1)
    \end{equation}
    which is a $(\de,u,\demm)$-set. Now apply Lemma \ref{lem.tv} to $A$ and $Y$ where we find $y \in Y$ so that
    \begin{equation}
        \cn {A+yA} > \de^{-c}|A|,
    \end{equation}
     Here $c$ is chosen so that $c = c(s,\sigma) > 0$ provided that $\e > 0$ is small enough.
    
    We now use Ruzsa calculus to complete the proof. We may write $y = y_1 - y_2$ for $y_1,y_2 \in NX^{(N)}.$ By the Pl\"unnecke--Ruzsa inequalities, and the triangle inequality, we have
    \begin{align}
        \de^{-c} |A| &< \cn{A+y_1A - y_2A}\\
        &\lesssim \cn{A+A}\cn{A+y_1A}\cn{A-y_2A}|A|^{-2}\\
        &\lesssim\cn{A+y_1A}^3\cn{A+y_2A}^3\cn{y_1A}^{-1}\cn{y_2A}^{-2}|A|^{-2}\\
        &\lesssim \de^{-O_N(\e)} \cn{A+y_1A}^3\cn{A+y_2A}^3|A|^{-5}
    \end{align}
    so, say for $y_1,$ we have
    \begin{equation}
        \cn{A+y_1A} \gtrsim \de^{-c/10}|A|.
    \end{equation}
We may write $y_1 = z_1 + \ldots + z_N$ for $z_j \in X^{(N)}.$ Again, by the Pl\"unnecke--Rusza inequalities, we have
\begin{align}
    \de^{-c/10}|A| &\les \cn{A+A}\prod_{j=1}^N\cn {A+z_jA}|A|^{-N}\\
    &\lesssim \dem \cn{A+z_1A}^3\prod_{j=1}^N\cn {A+z_jA}|A|^{-N-1}
\end{align}
and so there exists $z_j$ so that
    \begin{equation}
        \de^{-\frac{c}{100N}}|A| \lesssim \cn{A+z_jA}.
    \end{equation}
    We may write $z_j = w_1\cdots w_N.$ By successive uses of the triangle inequality we have
    \begin{align}
    \cn{A+w_1\ldots w_NA} &\lesssim \cn{A+w_1A}\cn{w_1A + w_1\ldots w_NA}\cn{w_NA}^{-1} \\
    &\lesssim \dem \cn{A+w_1A}\cn{A + w_2\ldots w_NA}|A|^{-1} \\
    &\lesssim \cdots\\
    &\lesssim \dem \prod_j \cn{A+w_jA} |A|^{-N+1}.
    \end{align}
    Therefore there exists $w_j \in X$ so that
    \begin{equation}
        \cn{A+w_jA} \gtrsim \de^{-\frac{c}{1000N^2}}|A|.
    \end{equation}
Completing the proof. The correct dependence on parameters will come from observing the increase depends on the parameter $N$ from Proposition \ref{prop.expansion} and the parameter $c$ from Lemma \ref{lem.tv}.
\end{proof}
\section{Projections of general sets}

We now prove the main result of the paper, Theorem \ref{thm.adultproj}.
\begin{proof}[Proof of Theorem \ref{thm.adultproj}]
\begin{steps}
\step{Initial reductions}
    We replace $X$ with $X \setminus B(0,\de^{\e/2t})$ as in previous proofs. 
    Suppose the result is false. We claim that we may find  $x_1,x_2,x_3 \in X$ and $G_3 \subset G_2 \subset G_1 \subset G $ so that 
    \begin{enumerate}
        \item $|G_{j+1}| > \depp |G_j|,$
        \item $|x_i - x_j| > \de^{\e/2t},$
        \item $\cn{\pi_{x_j}(G_{j})} \leq \de^{-c_0}|G|^{1/2}.$
    \end{enumerate}
Since the result is false, we may find $x_1 \in X$ and  $G_1 \subset G$ so that $(1)$ and $(3)$ hold. Since 
$$|X \cap B(x_1, \de^{\e/2t})| \leq \de^\e|X| \ll |X|,$$
we use the falseness of the result again, to find $x_2 \in X$ with $|x_1 - x_2| > \de^{\e/2t}$ and a $G_2 \subset G_1$ which satisfy (1) and (3). Iterating this step a final time proves the claim.

We identify $X$ in projective space via the identification of $x$ with the line spanned by $\{(1,x)\},$ thus identifying $\pi_x$ with the orthogonal projection to said line. This will affect metric quantities by an acceptable factor of $\dep.$ 
    \step{Reducing product-set structure}
   Suppose it were the case that $\pi_{x_1}$ is the projection to the abscissa and $\pi_{x_2}$ is the projection to the ordinate. This may be obtained by a simple coordinate change which we will explain at the conclusion of the proof. Set $A_1$ and $A_2$ to be maximal $\de$-separated subsets of the orthogonal projection of $G$ to the abscissa and the ordinate respectively. After identifying $A_1,A_2 \subset E,$ 
   we have 
    $$G \subset A_1 \times A_2.$$

    By property (3) above, we have 
    $$|A_1||A_2| \leq \de^{-2c_0-O(\e)}|G| = \de^{-2c_0 - O(\e)} |H|,$$
    and so 
    $$|G_3| \geq \de^{5c_0}|A_1||A_2|. $$
Also by $(1)$ and $(3)$, $$\de^{3\e}|G| \leq |G_3| \leq |A_1||A_2| \leq \de^{-c_0-O(\e)}|A_1||G|^{1/2}.$$
Thus
$|A_1| \geq \de^{5c_0}|G|^{1/2},$ and likewise the same bound holds for $|A_2|.$

\step{Applying Balog--Szemer\'edi--Gowers}
    Let $B_1$ be a maximal $\de$-separated subset of $x_3A_2.$ Set $H$ to be a maximal $\de$-separated subset of 
    $$\{(g,x_3g') : (g,g') \in G_3\}.$$
    Since we have
    $$\cn{A_1 \overset{H}{+} B_1} \leq \de^{-c_0}|G|^{1/2},$$
    provided that $\e >0$ is small enough, we apply Balog--Szemer\'edi--Gowers \cite[Exercise 6.4.10]{tao} to find $A' \subset A_1$ and $B \subset B_1$ with $|A'| \gtrsim \de^{c_1} |A_1|$ and $|B| \gtrsim  \de^{c_1} |B_1|$ so that
        \begin{equation}\label{eq.unaffected}
            \cn {A' + B} \lesssim \de^{-c_1}|A_1|^{1/2}|B_1|^{1/2}.
        \end{equation}
        and with
        $$|H \cap (A' \times B)| \gtrsim \de^{c_1}|A'||B|.$$
        Here $c_1 = O(c_0),$ with the implicit constant generic. 

        \step{Reducing $A'$ to $L^1$ support}
Let $\pi$ denote the projection to the abscissa. Set 
\begin{equation}\label{eq.bigfibres}
\cA = \{I \in \dy\de {A'} : |\pi^{-1}(I)\cap G| \geq \de^{10c_1}|G|^{1/2}\},
\end{equation}
and $A$ to be a maximal $\de$-separated subset of $\cup \cA \cap A'.$
Note that
$$|H \cap (A'\setminus A \times B)| \leq |A'|\de^{10c_1}|G|^{1/2} \lesssim \de^{5c_1}|A'||A_2| \lesssim \de^{4c_1}|A'||B| \ll |H \cap (A' \times B)|.$$
Therefore, 
\begin{equation}\label{eq.bigH}
|H \cap (A \times B)| \gtrsim \de^{c_1}|A'||B| \gtrsim \de^{3c_1}|A_1||B_1| > \de^{5c_1}|A_1||B_1|.
\end{equation}
In particular,
\begin{equation}\label{eq.boundonA}
|A| \gtrsim \de^{c_1}|A'| \gtrsim \de^{2c_1}|A_1| \gtrsim \de^{4c_1}|G|^{1/2}.
\end{equation}
        Set 
        $$H_0 = H \cap (A \times B).$$

\step{Applying Theorem \ref{thm.babyproj}}
 Choose $$\sigma/2 - 5c_1 < \tau < \sigma/2 + 5c_1$$
        so that
        $$|A| = \de^{-\tau}.$$
        Suppose that we may apply Theorem \ref{thm.babyproj} to $A$ and $Xx_3^{-1}$ that is, there exists $\eta > 0$ so that $A$ is a $(\de,\eta,\demm)$-set. We know that since $|x_3| > \de^{\e/2s}$ the directions $ Xx_3^{-1}$  remain a $(\de,t,\dem)$-set. We find $x \in X$ so that 
        \begin{equation}
            \cn{A+xx_3^{-1}A} > \de^{-c}|A| > \de^{-c/2}|G|^{1/2}
        \end{equation}
        where $c >0$ is as in the statement of Theorem \ref{thm.babyproj}, and $c_0$ is small enough. We will remark on the dependence of $c$ on various constants at the conclusion of the proof. 
        
        \step{Concluding the proof}
        Set 
$$F = \{(h_1,x_3^{-1}h_2) : (h_1,h_2) \in H_0\} \subset A_1 \times A_2.$$
        By \eqref{eq.bigH}, since we are still assuming the result is false for $x$ as above, we may find $F' \subset F$ with $|F'| > \depp |F|$ so that 
$$\cn{A_1\overset{F'}{+} xA_2} \leq \de^{-c_0}|G|^{1/2}.$$
        Set
        $$H' = \{(f,x_3f') : (f,f') \in F'\} \subset A \times B.$$
        
        Note that for each $(a,a') \in A \times A$ and $(h_1,h_2) \in H'$ we have $(a,a') + t \in H'$ for $$t = (h_1 - a, h_2 - a') \in (A-A) \times (B-A). $$ Therefore 
        \begin{equation}
            1_{A \times A} \leq \frac{1}{|H'|} \int_{(A -A) \times (B-A)} 1_{H' - t} dt,
        \end{equation}
        and so we have
        \begin{equation}
            1_{A + xx_3^{-1}A} \leq \frac{1}{|H'|} \int_{(A -A) \times (B-A)} 1_{\pi_{xx_3^{-1}}(H') - \pi_{xx_3^{-1}}(t)} dt.
        \end{equation}
        Therefore, using \eqref{eq.unaffected}, \eqref{eq.bigH} and the triangle inequality, we obtain
        \begin{align}
            \cn{A + xx_3^{-1}A} &\leq |H'|^{-1}\cn{A-A}\cn{B-A}\cn{A \overset{H'}{+} xx_3^{-1}B}\\
            &\lesssim \frac{ \cn{A+B}^5\cn{A\overset{H'}{+} xx_3^{-1}B}}{|H'||A||B|^2}\\
             &\lesssim \frac{\de^{-100c_1} |A|^{5/2}|B|^{5/2}\cn{A_1\overset{F'}{+} xA_2}}{|H'||A||B|^2}\\
            &\lesssim \de^{-200c_1} |A|.\\
        \end{align}
Take $c_0 >0$ to be so that $c > 1000c_1,$ which means that 
$$\cn{A_1\overset{F'}{+} xA_2} > \de^{-c_0}|G|^{1/2},$$ a contradiction.

\step{Checking the on-concentration of $A$}
This leaves us to check that $A$ is a $(\de,\eta,\demm)$-set for some $\eta > 0.$ Suppose not. Then there exists a $\de < \rho < 1$ and an interval $I$ of length $\rho$ so that 
$$|A \cap I| > \demm \rho^\eta|A|.$$
Note that this forces 
\begin{equation}\label{eq.boundrho}
\rho < \de^{\e/\eta}.
\end{equation}
Using \eqref{eq.bigfibres} and \eqref{eq.boundonA}, we obtain that
$$|\pi^{-1}_{x_1} (I) \cap G| > \de^{10c_1-\e}\rho^\eta|G|^{1/2}|A| > \de^{20c_1} \rho^\eta|G|.$$

Recall that our choice of $x_1$ was not restricted, and after restricting $\rho$ to $\sim \log 1/\de$ values, we may find $$\de < \rho < \de^{\e/\eta}$$ so that for all $x \in X'$ with $|X'| \approx |X|$ we find an interval $I_x$ of length $\rho$ such that
$$|\pi^{-1}_{x} (I_x) \cap G| > \de^{20c_1} \rho^\eta|G| > \rho^{\eta ( 1 + 20c_1/\e)}|G|,$$
where we have used \eqref{eq.boundrho}.

Let $0 < \alp < 1 .$ Since $X'$ is a $(\de,t,\de^{-10\e}),$ using \eqref{eq.boundrho} it follows that 
    $$\cns {\rho^{\alp}} X \geq \de^{10\e} \rho^{-\alpha t} \geq \rho^{-\alpha t + 10 \eta}.$$
So we may find a $\rho^{\alpha}$-separated net 
$$\{x_1,\ldots, x_N\} \subset X'$$
with $N \sim \rho^{-\alpha t + 10 \eta} .$ 
We now claim that we may find $1 \leq i \neq j \leq N$ so that
\begin{equation}\label{eq.claim}
|\pi^{-1}_{x_i} (I_{x_i}) \cap \pi^{-1}_{x_j} (I_j) \cap G| \geq \de^{-10\e}\rho^{(1-\alp)s} |G|.
\end{equation}
If not, then by inclusion-exclusion we have
\begin{align}
\left| \bigcup_{i=1}^N \pi^{-1}_{x_i}(I_{x_i}) \cap G \right| &\geq \sum_{i=1}^N |\pi^{-1}_{x_i}(I_{x_i}) \cap G| - \sum_{i\neq j} |\pi^{-1}_{x_i} (I_{x_i}) \cap \pi^{-1}_{x_j} (I_j) \cap G| \\
&\gtrsim  \rho^{-\alpha t + \eta ( 11 + 20c_1/\e)} |G| - \rho^{-2\alpha t + (1-\alp)s + 20 \eta}\de^{-10\e} |G|\\
&\gtrsim  \rho^{-\alpha t + 12\eta} |G| - \rho^{-2\alpha t + (1-\alp)s + 10 \eta} |G|,
\end{align}
where $\e,\eta$ are taken small enough in terms of $c_1.$ Now select $0 < \alpha < 1,$ and $\eta > 0$ small enough so the above holds, and so that 
$$\frac{(1-\alpha)s - \alpha t}{2} > \eta > 0.$$
This ensures that

$$\left| \bigcup_{i=1}^N \pi^{-1}_{x_i}(I_{x_i}) \cap G \right| \gg |G|,$$
a blatant contradiction. However, since $x_i$ and $x_j$ are $\rho^{\alp}$-separated, this means that 
$$\pi^{-1}_{x_i} (I_{x_i}) \cap \pi^{-1}_{x_j} (I_j),$$
is contained in a ball of radius $\sim \rho^{1-\alpha}$ and therefore
$$|\pi^{-1}_{x_i} (I_{x_i}) \cap \pi^{-1}_{x_j} (I_j) \cap G| \lesssim \demm \rho^{(1-\alp)s}|G| \ll \de^{-10\e}\rho^{(1-\alp)s}|G|,$$
which contradicts \eqref{eq.claim}. This leaves us to conclude that $A$ is a $(\de,\eta,\demm)$-set, for the $\eta > 0$ selected above, for at least one $x \in X.$

\step{Checking dependence of constants}
We are left to check the correct dependence on constants. Note that in general
$$c_0 = c_0(c),$$
$$c = c(\eta, \tau,t,d),$$
$$\tau = \tau(\sigma),$$
$$\eta = \eta(s,t).$$
Therefore 
$$c_0 = c_0(s,\sigma,d,t) > 0,$$
as required. Further, if $E$ is not a non-commutative extension of $\Q_p$ then 
$$c = c(\eta,\tau, t, t/d),$$
giving us
$$c_0 = c_0(s,\sigma,t,t/d).$$

\step{Checking the coordinate change}
Let $L: E^2 \rightarrow E^2$ be the change of basis which maps $(1,x_1) \rightarrow (1,0)$ and $(1,x_2) \rightarrow (0,1).$ By the separation of $x_1$ and $x_2$ it is readily checked that $L$ is bi-Lipschitz with constant $\dem.$ By an abuse of notation, consider the induced map $L$ on $X$ by
$$L(x) = \pi_y \circ \pi_{x}(L(1,x))^{-1} L(1,x),$$
where $\pi_x$ is the projection to the abscissa, and $\pi_y$ is the projection to the ordinate. Note that
$$\de^{\pm O(\e)} \|L(x)\| \pi_x = \pi_{L(x)} \circ (L^T)^{-1}.$$ It is readily checked that $(L^T)^{-1}(G)$ and $L(X)$ retain the desirable non-concentration properties, and so we apply the proof above with $L(X)$ and $(L^T)^{-1}(G).$
\end{steps}
\end{proof}
\section{Proof of Theorem \ref{thm.adultproj2}}
We upgrade the conclusion Theorem \ref{thm.adultproj} readily once we assume that $X$ strongly avoids sub-algebras.
\begin{prop}\label{prop.adultproj}
      Let $d$ be a positive integer. Let $E$ be a normed division algebra over $\Q_p$ or $\R.$ Let $0 < s \leq \sigma  \leq 2d, t > 0.$ There exists a universal $C \geq 1$ and $c_0 = c_0(s,\sigma,d,t) > 0$ so that the following holds for all $\de,\e >0$ small enough.
    
 Let $G \subset  B_{E \times E}(0,1)$ be a $(\de,s,\demm)$-set with $|G| = \de^{-\sigma}.$ Let $X \subset B_E(0,1)$ be a $(\de,t,\demm)$-set which strongly avoids sub-algebras. Then there exists $F \subset G$ with $|F| > \de^{Cc_0} |G|$ and $Y \subset X$ with $|Y| > (1-\de^\e)|X|$ so that for all $x \in Y$ we have
        \begin{equation}\label{eq.conclusion}
        \cn{\pi_x(F')} > \de^{-c}|G|^{1/2} \text{ for all } F' \subset F \text{ with } |F'| > \depp |F|.
        \end{equation}
        
            Further, provided that $E$ is not a non-commutative division algebra of $\Q_p,$ then $c_0 = c_0(s,\sigma,t,t/d).$
        \end{prop}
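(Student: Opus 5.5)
We follow the proof of Theorem \ref{thm.adultproj} verbatim, changing only the initial reduction so that it produces many bad directions rather than a few. Suppose the conclusion fails. Then for every $F \subset G$ with $|F| > \de^{Cc_0}|G|$ the set $Z_F$ of bad directions has size $|Z_F| \geq \de^{\e}|X|$. Here we call $x \in X$ \emph{bad for $F$} if there is $F'_x \subset F$ with $|F'_x| > \depp|F|$ and $\cn{\pi_x(F'_x)} \leq \de^{-c_0}|G|^{1/2}$.

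Because $X$ \emph{strongly} avoids sub-algebras, each such $Z_F$ (after removing $X \cap B(0,\de^{\e/2t})$) is again a $(\de,t,\de^{-O(\e)})$-set that avoids sub-algebras. It is a dense subset of a $(\de,t,\demm)$-set, so non-concentration is inherited with an extra factor $\de^{-\e}$. It avoids sub-algebras by the definition of strong avoidance with $C = \de^{-O(\e)}$. This is exactly the input used in Theorem \ref{thm.babyproj} and in the final steps of Theorem \ref{thm.adultproj}.

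With this in hand we rerun the initial reduction. Take $F=G$ and pick $x_1 \in Z_G$ and $G_1 = F'_{x_1}$. Then pick $x_2 \in Z_{G_1}$ with $|x_2-x_1| > \de^{\e/2t}$; this is possible because $Z_{G_1}$ is non-concentrated. Then pick $x_3 \in Z_{G_2}$ in the same way. Properties (1)--(3) of Step 1 hold, and $|G_3| \geq \de^{3\e}|G| > \de^{Cc_0}|G|$.

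The coordinate change, the Balog--Szemer\'edi--Gowers step, and the $L^1$ reduction to $A$, $B$, $H_0$ then go through unchanged. They produce a set $F \subset A_1\times A_2$ with $|F| \geq \de^{O(c_1)}|G|$, where $c_1 = O(c_0)$. This is where the universal constant $C$ is fixed, large enough that $|F| > \de^{Cc_0}|G|$.

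We now apply Theorem \ref{thm.babyproj} not to all of $X x_3^{-1}$ but to $Z_F x_3^{-1}$. By the previous paragraph this is a non-concentrated set that avoids sub-algebras. The theorem gives $x \in Z_F$ with $\cn{A + x x_3^{-1}A} > \de^{-c}|A|$. Since $x$ is bad for $F$, there is $F' \subset F$ with $|F'| > \depp |F|$ and $\cn{\pi_x(F')} \leq \de^{-c_0}|G|^{1/2}$. The Pl\"unnecke--Ruzsa covering computation of the concluding step then bounds $\cn{A+xx_3^{-1}A} \lesssim \de^{-200c_1}|A|$, which is a contradiction once $c > 1000c_1$.

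The non-concentration of $A$ (the $\eta$-step) is checked exactly as before. The argument there uses only a large subset $X' \subset X$ of directions, so running it inside $Z_F$, which has density at least $\de^{\e}$, costs only $\de^{O(\e)}$ factors.

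The main obstacle is making the quantifiers line up. The exceptional set must be allowed to depend on $F$, while the final $F$ is only constructed after $x_1,x_2,x_3$ have been chosen. Negating the statement in the form ``for every dense $F$, the bad set $Z_F$ has density at least $\de^\e$'' resolves this, since each stage then picks from a fresh bad set. The price is that we need strong avoidance at every stage, and not merely avoidance by $X$ itself.

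The dependence of constants is identical to that in Theorem \ref{thm.adultproj}. Strong avoidance changes only the $\dep$-losses, so $c_0 = c_0(s,\sigma,d,t)$, with $c_0 = c_0(s,\sigma,t,t/d)$ outside the non-commutative $\Q_p$ case.
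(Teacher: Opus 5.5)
Your outline follows the paper's argument step for step. You negate to get, for each dense $F$, a bad set $Z_F$ of density at least $\delta^{\epsilon}$; use strong avoidance so that $Z_F$ avoids sub-algebras; pick $x_1,x_2,x_3$ from successive bad sets; run Balog--Szemer\'edi--Gowers; and feed the bad directions for the final $F$ into Theorem~\ref{thm.babyproj}. Your handling of the quantifiers is, if anything, more careful than the paper's.

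The gap is the sentence ``By the previous paragraph this is a non-concentrated set that avoids sub-algebras.'' The previous paragraph only shows that $Z_F$ avoids sub-algebras. The set you actually feed into Theorem~\ref{thm.babyproj} is different: it is the image of $Z_F$ under the M\"obius map on directions induced by the coordinate change $L$, followed by right multiplication by the inverse of (the image of) $x_3$. Neither operation sends sub-algebras to sub-algebras. For instance, $Z_Fx_3^{-1}$ lies near a sub-algebra $S$ exactly when $Z_F$ lies near $Sx_3$, which is in general not a sub-algebra; the same problem arises for $L$. So the hypothesis of Theorem~\ref{thm.babyproj} is never verified, and strong avoidance of $X$ cannot supply it.

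This cannot be patched, because the statement is false as written. Take $E=\mathbb{C}$ over $\mathbb{R}$, whose only nonzero proper sub-algebra is $\mathbb{R}$. Let $A=\{0,\delta,2\delta,\dots,1/2\}$, $G=\{(a,-ia'):a,a'\in A\}$ and $X=\{i\lambda:\lambda\in[1/2,1]\cap\delta\mathbb{Z}\}$. Then:
\begin{enumerate}
\item $G$ is a $(\delta,2,O(1))$-set with $|G|\sim\delta^{-2}$, so $s=\sigma=2<2d=4$, away from the endpoint.
\item $X$ is a $(\delta,1,O(1))$-set.
\item Every point of $X$ has distance at least $1/2$ from $\mathbb{R}$, so $X$ strongly avoids sub-algebras.
\end{enumerate}
Yet for $x=i\lambda\in X$ we get $\pi_x(a,-ia')=a+\lambda a'\in[0,1]$. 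Hence $N_\delta(\pi_x(F'))\lesssim\delta^{-1}\sim|G|^{1/2}$ for every $x\in X$ and every $F'\subset G$, so no $F$ and $Y$ can satisfy the conclusion.

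This is exactly the failure described above. Here $Xx_3^{-1}\subset\mathbb{R}$. Moreover, $X$ lies on the circle $i\mathbb{R}\cup\{\infty\}$ through $x_1,x_2,x_3$, so any M\"obius normalisation $x_1\mapsto 0$, $x_2\mapsto\infty$, $x_3\mapsto 1$ sends it into $\mathbb{R}\cup\{\infty\}$.

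The paper's own proof makes the identical move: it checks only non-concentration of $Xx_3^{-1}$ and of $L(X)$, never sub-algebra avoidance. So you have reproduced a gap in the source rather than missed an idea it contains. A correct statement needs a non-degeneracy hypothesis on $X$ that is invariant under the maps the proof applies to directions. One option is to require that $X$ strongly avoid every image of a proper sub-algebra under translations, left and right multiplications and inversion; for $E=\mathbb{C}$ this means every line and circle.
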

The proof is essentially the same as the proof of Theorem \ref{thm.adultproj}. We spell out the main difference below.

\begin{proof}[Proof of Proposition \ref{prop.adultproj}]
The difference is in step 5 only, the other steps are identical. By \eqref{eq.bigH}, since we are still assuming the result is false, in particular for $F$ and any $Y \subset X$ with $|Y| > (1-\de^\e)|X|,$ we perform the following process:
        Let $w_1 \in X$ and $F_1 \subset F$ with $|F_1| > \depp |F|$ so that
        $$\cn{A_1\overset{F_1}{+} w_1A_2} \leq \de^{-c_0}|G|^{1/2}.$$
        Let $w_2 \in X\setminus \{w_1\}$ and $F_2 \subset F$ with $|F_2| > \depp |F|$ so that
        $$\cn{A_1\overset{F_2}{+} w_2A_2} \leq \de^{-c_0}|G|^{1/2},$$
        et cetera. We may continue this process provided that
        $$|X \setminus \{w_1,\dots, w_N\}| > (1-\de^\e)|X|,$$
        that is, we may find $X' \subset X$ with $|X'| > \depp |X|$ so that for all $x \in X$ we may find $F_x \subset F$ with $|F_x| > \depp |F|$ so that
        $$\cn{A_1\overset{F_x}{+} xA_2} \leq \de^{-c_0}|G|^{1/2}.$$
        
         Choose $$\sigma/2 - 5c_1 < \tau < \sigma/2 + 5c_1$$
        so that
        $$|A| = \de^{-\tau}.$$
        Since $X$ strongly avoids sub-algebras it follows that $X'$ avoids sub-algebras. It is readily checked that $X'$ is a $(\de,t, \de^{-10\e})$-set.
        Suppose that we may apply Theorem \ref{thm.babyproj} to $A,$ that is, there exists $\eta > 0$ so that $A$ is a $(\de,\eta,\demm)$-set. We therefore find $x \in X'$ so that 
        \begin{equation}
            \cn{A+xx_3^{-1}A} > \de^{-c}|A|,
        \end{equation}
        where $c >0$ is as in the statement of Theorem \ref{thm.babyproj}. 
        
\end{proof}

\begin{proof}[Proof of Theorem \ref{thm.adultproj2}]

Apply Proposition \ref{prop.adultproj} to find $F_1$ and $X_1$ so that the conclusion holds. If $|G \setminus F_1| < \de^{2\e} |G|$ then we are finished. To see this, take $F \subset G$ with $|F| > \de^\e |G|.$ Then $|F_1 \cap F| \gtrsim \de^\e |F_1|,$ and so $F$ has a large projection for all $x \in X_1.$ Otherwise, we take $F_2 \subset G \setminus F_1$ with $|F_2| > \de^\e |G \setminus F_1| > \de^{3\e} |G|,$ and $X_2 \subset X$ so that the conclusion holds. If $|G \setminus (F_1 \cup F_2)| < \de^{2\e} |G|$ we end the process, otherwise we continue. At stage $N$ we will have found sets $F_j \subset G$ with $|F_j| > \de^{3\e}|G|,$ $X_j \subset X$ with $|X_j| > (1-\de^\e)|X|.$ Since the $F_j$ are disjoint this process must end.

 Let $\mu = 1_X.$ Consider the product measure on $\mu \times \rho$, where $\rho$ is the (non-probability) measure on $\{ 1, \dots, N \}$, where $j$ has weight $|F_j|$. Let $Y$ be the set of $x \in X$ such that 
            \begin{equation}
       	\left|\bigcup_{j: x \in X_j} F_j\right| =\sum_{j: x \in X_j} |F_j|  = \rho\{ j: x \in X_j\} \geq  (1-\delta^{\e})|G|. 
            \end{equation}
We have
        \[
            \int \rho\{ j: x \in X_j \} d\mu(x) = (\mu\times\rho)(\{ (x, j): x \in X_j\}) \ge (1-\delta^{\e})|G|.
        \]    
        By Markov's inequality, this implies that
        \[
        \mu(Y) \geq 1-\delta^{\e},
        \]  
        and so $|Y| \geq (1-\de^\e)|X|.$
        Finally, if $y\in Y$ and $F\subset G$ satisfies $|F|\geq \delta^{\e}|G|$, then there is $j$ such that $x\in X_j$ and $|F\cap F_j|\geq \delta^{2\e}|F_j|.$

\end{proof}

\bibliographystyle{alpha}
	\bibliography{references}

@article{BFLM11,
  author     = {Bourgain, Jean and Furman, Alex and Lindenstrauss, Elon and
                Mozes, Shahar},
  title      = {Stationary measures and equidistribution for orbits of
                nonabelian semigroups on the torus},
  journal    = {J. Amer. Math. Soc.},
  fjournal   = {Journal of the American Mathematical Society},
  volume     = {24},
  year       = {2011},
  number     = {1},
  pages      = {231--280},
  issn       = {0894-0347},
  mrclass    = {37A45 (11L07 22F10 37A17)},
  mrnumber   = {2726604},
  mrreviewer = {Thomas Ward},
  doi        = {10.1090/S0894-0347-2010-00674-1},
  url        = {https://doi.org/10.1090/S0894-0347-2010-00674-1}
}

@article{BHZ24,
  title   = {Khintchine dichotomy for self-similar measures},
  author  = {B{\'e}nard, Timoth{\'e}e and He, Weikun and Zhang, Han},
  journal = {preprint, arXiv:2409.08061},
  year    = {2024}
}

@article{BenardHe24,
  title   = {Multislicing and effective equidistribution for random walks on some homogeneous spaces},
  author  = {B{\'e}nard, Timoth{\'e}e and He, Weikun},
  journal = {preprint, arXiv:2409.03300},
  year    = {2024}
}

@article{He20,
  author   = {He, Weikun},
  title    = {Orthogonal projections of discretized sets},
  journal  = {J. Fractal Geom.},
  fjournal = {Journal of Fractal Geometry. Mathematics of Fractals and
              Related Topics},
  volume   = {7},
  year     = {2020},
  number   = {3},
  pages    = {271--317},
  issn     = {2308-1309},
  mrclass  = {28A80 (11B30)},
  mrnumber = {4148151},
  doi      = {10.4171/jfg/92},
  url      = {https://doi.org/10.4171/jfg/92}
}

@article{OSW24,
  author     = {Orponen, Tuomas and Shmerkin, Pablo and Wang, Hong},
  title      = {Kaufman and {F}alconer estimates for radial projections and a
                continuum version of {B}eck's theorem},
  journal    = {Geom. Funct. Anal.},
  fjournal   = {Geometric and Functional Analysis},
  volume     = {34},
  year       = {2024},
  number     = {1},
  pages      = {164--201},
  issn       = {1016-443X,1420-8970},
  mrclass    = {28A80 (28A78)},
  mrnumber   = {4706445},
  mrreviewer = {Jonathan\ MacDonald\ Fraser},
  doi        = {10.1007/s00039-024-00660-3},
  url        = {https://doi.org/10.1007/s00039-024-00660-3}
}

@article{OrponenShmerkin23,
  author     = {Orponen, Tuomas and Shmerkin, Pablo},
  title      = {On the {H}ausdorff dimension of {F}urstenberg sets and
                orthogonal projections in the plane},
  journal    = {Duke Math. J.},
  fjournal   = {Duke Mathematical Journal},
  volume     = {172},
  year       = {2023},
  number     = {18},
  pages      = {3559--3632},
  issn       = {0012-7094},
  mrclass    = {28A80 (28A75 28A78)},
  mrnumber   = {4718435},
  mrreviewer = {Jonathan MacDonald Fraser},
  doi        = {10.1215/00127094-2022-0103},
  url        = {https://doi.org/10.1215/00127094-2022-0103}
}

@article{OrponenShmerkin23b,
  title   = {Projections, {F}urstenberg sets, and the {ABC} sum-product problem},
  author  = {Orponen, Tuomas and Shmerkin, Pablo},
  journal = {arXiv preprint arXiv:2301.10199},
  year    = {2023}
}

@article {RazZahl24,
    AUTHOR = {Raz, Orit E. and Zahl, Joshua},
     TITLE = {On the dimension of exceptional parameters for nonlinear
              projections, and the discretized {E}lekes-{R}\'{o}nyai theorem},
   JOURNAL = {Geom. Funct. Anal.},
  FJOURNAL = {Geometric and Functional Analysis},
    VOLUME = {34},
      YEAR = {2024},
    NUMBER = {1},
     PAGES = {209--262},
      ISSN = {1016-443X},
   MRCLASS = {28A78 (11B30 28A80 52C10)},
  MRNUMBER = {4706447},
MRREVIEWER = {Esa J\"{a}rvenp\"{a}\"{a}},
       DOI = {10.1007/s00039-024-00664-z},
       URL = {https://doi.org/10.1007/s00039-024-00664-z},
}

@article{Ren23,
  title   = {Discretized Radial Projections in $\mathbb{R}^d$},
  author  = {Ren, Kevin},
  journal = {arXiv preprint arXiv:2309.04097},
  year    = {2023}
}

@article{RenWang23,
  title   = {Furstenberg sets estimate in the plane},
  author  = {Kevin Ren and Hong Wang},
  journal = {arXiv preprint arXiv:2308.08819},
  year    = {2023}
}

@article{Shmerkin23,
  author     = {Shmerkin, Pablo},
  title      = {A non-linear version of {B}ourgain's projection theorem},
  journal    = {J. Eur. Math. Soc. (JEMS)},
  fjournal   = {Journal of the European Mathematical Society (JEMS)},
  volume     = {25},
  year       = {2023},
  number     = {10},
  pages      = {4155--4204},
  issn       = {1435-9855},
  mrclass    = {28A75 (05D99 26A16 28A80 49Q15)},
  mrnumber   = {4634691},
  mrreviewer = {Lars Olsen},
  doi        = {10.4171/jems/1283},
  url        = {https://doi.org/10.4171/jems/1283}
}

@article {ShmerkinWang25,
    AUTHOR = {Shmerkin, Pablo and Wang, Hong},
     TITLE = {On the distance sets spanned by sets of dimension {$d/2$} in
              {$\Bbb R^d$}},
   JOURNAL = {Geom. Funct. Anal.},
  FJOURNAL = {Geometric and Functional Analysis},
    VOLUME = {35},
      YEAR = {2025},
    NUMBER = {1},
     PAGES = {283--358},
      ISSN = {1016-443X},
   MRCLASS = {28A78 (28A80)},
  MRNUMBER = {4865135},
MRREVIEWER = {Lars Olsen},
       DOI = {10.1007/s00039-024-00696-5},
       URL = {https://doi.org/10.1007/s00039-024-00696-5},
}

@article{WangZahl25a,
  title   = {Sticky {K}akeya sets and the sticky {K}akeya conjecture},
  author  = {Wang, Hong and Zahl, Joshua},
  journal = {J. Amer. Math. Soc.},
  volume  = {accepted},
  year    = {2025}
}

@article{WangZahl25b,
  title   = {The {A}ssouad dimension of {K}akeya sets in $\mathbb{R}^3$ },
  author  = {Wang, Hong and Zahl, Joshua},
  journal = {Invent. Math.},
  volume  = {accepted},
  year    = {2025}
}

@article{WangZahl25c,
  author  = {Wang, Hong and Zahl, Joshua},
  journal = {Preprint, arXiv:2502.17655},
  year    = {2025}
}

@article {bou,
	AUTHOR = {Bourgain, J.},
	TITLE = {The discretized sum-product and projection theorems},
	JOURNAL = {J. Anal. Math.},
	FJOURNAL = {Journal d'Analyse Math\'{e}matique},
	VOLUME = {112},
	YEAR = {2010},
	PAGES = {193--236},
	ISSN = {0021-7670,1565-8538},
	MRCLASS = {11B30},
	MRNUMBER = {2763000},
	MRREVIEWER = {Serge\u{\i}\ V.\ Konyagin},
	DOI = {10.1007/s11854-010-0028-x},
}

@article {bougam,
AUTHOR = {Bourgain, Jean and Gamburd, Alex},
TITLE = {On the spectral gap for finitely-generated subgroups of {$\rm
SU(2)$}},
JOURNAL = {Invent. Math.},
FJOURNAL = {Inventiones Mathematicae},
VOLUME = {171},
YEAR = {2008},
NUMBER = {1},
PAGES = {83--121},
ISSN = {0020-9910,1432-1297},
MRCLASS = {22E30 (11B75 43A75)},
MRNUMBER = {2358056},
MRREVIEWER = {Ben\ Joseph\ Green},
DOI = {10.1007/s00222-007-0072-z},
URL = {https://0-doi-org.pugwash.lib.warwick.ac.uk/10.1007/s00222-007-0072-z},
}

@article {bourgaingamspectral,
    AUTHOR = {Bourgain, J. and Gamburd, A.},
     TITLE = {A spectral gap theorem in {${\rm SU}(d)$}},
   JOURNAL = {J. Eur. Math. Soc. (JEMS)},
  FJOURNAL = {Journal of the European Mathematical Society (JEMS)},
    VOLUME = {14},
      YEAR = {2012},
    NUMBER = {5},
     PAGES = {1455--1511},
      ISSN = {1435-9855,1435-9863},
   MRCLASS = {22E30 (11B30)},
  MRNUMBER = {2966656},
MRREVIEWER = {B.\ Sury},
       DOI = {10.4171/JEMS/337},
       URL = {https://doi.org/10.4171/JEMS/337},
}

@article{garaev2007explicit,
  title={An explicit sum-product estimate in $\mathbb{F}_p$},
  author={Garaev, Moubariz Z},
  journal={International Mathematics Research Notices},
  volume={2007},
  pages={rnm035},
  year={2007},
  publisher={Oxford University Press}
}

@article {gut,
	AUTHOR = {Guth, Larry and Katz, Nets Hawk and Zahl, Joshua},
	TITLE = {On the discretized sum-product problem},
	JOURNAL = {Int. Math. Res. Not. IMRN},
	FJOURNAL = {International Mathematics Research Notices. IMRN},
	YEAR = {2021},
	NUMBER = {13},
	PAGES = {9769--9785},
	ISSN = {1073-7928},
	MRCLASS = {11B30 (05D10 28A99)},
	MRNUMBER = {4283564},
	MRREVIEWER = {Hanbin Zhang},
	DOI = {10.1093/imrn/rnz360},
	URL = {https://doi.org/10.1093/imrn/rnz360},
}

@inbook{nic,
author = {de Saxcé, Nicolas},
year = {2025},
month = {04},
pages = {347-372},
title = {A Presentation of the Discretized Sum-Product in Division Algebras},
isbn = {978-3-031-80452-6},
doi = {10.1007/978-3-031-80453-3_13}
}

@article{o2025simple,
  title={Simple proofs of discretised projection theorems},
  author={O'Regan, William and Shmerkin, Pablo and Wang, Hong},
  journal={arXiv preprint arXiv:2511.21656},
  year={2025}
}

@book {tao,
	AUTHOR = {Tao, Terence and Vu, Van H.},
	TITLE = {Additive combinatorics},
	SERIES = {Cambridge Studies in Advanced Mathematics},
	VOLUME = {105},
	NOTE = {Paperback edition [of MR2289012]},
	PUBLISHER = {Cambridge University Press, Cambridge},
	YEAR = {2010},
	PAGES = {xviii+512},
	ISBN = {978-0-521-13656-3},
	MRCLASS = {11-02 (05-02 05D10 11B13 11B30 11P70 11P82 28D05)},
	MRNUMBER = {2573797},
}
    \end{document}